\newtheorem{definition}{Definition}
\newtheorem{theorem}[definition]{Theorem}
\newtheorem{corollary}[definition]{Corollary}
\newtheorem{assumption}[definition]{Assumption}
\newtheorem{remark}[definition]{Remark}
\newcommand*{\N}{\ensuremath{\mathbb{N}}}
\newcommand*{\Z}{\ensuremath{\mathbb{Z}}}
\newcommand*{\R}{\ensuremath{\mathbb{R}}}
\newcommand*{\C}{\ensuremath{\mathbb{C}}}
\renewcommand{\i}{\mathrm{i}}
\renewcommand{\phi}{\varphi}
\renewcommand{\rho}{{\varrho}}
\renewcommand{\epsilon}{{\varepsilon}}
\renewcommand{\d}[1]{\,\mathrm{d}#1 \,}
\newcommand{\I}{\mathcal{I}}
\newcommand{\A}{\mathcal{A}}
\newcommand{\J}{\mathcal{J}} 
\newcommand{\0}{{0}} 
\renewcommand{\S}{\mathcal{S}}
\newcommand{\grad}{\nabla}
\newcommand{\W}{{W_{\hspace*{-1pt}\Lambda}}} 
\newcommand{\Wast}{{W_{\hspace*{-1pt}\Lambda^\ast}}} 
\newlength{\dhatheight}
\newcommand{\high}[1]{} 
\begin{document}

\sloppy

\title{The Study of the Bloch Transformed Fields Scattered by Locally Perturbed Periodic Surfaces}
\author{Ruming Zhang\thanks{Center for Industrial Mathematics, University of Bremen
; \texttt{rzhang@uni-bremen.de}}}
\date{}

\maketitle

\begin{abstract}
Scattering problems from locally perturbed periodic surfaces have been studied both theoretically and numerically in recent years. In this paper, we will discuss more details of the structure of the Bloch transformed total fields. The idea is inspired by Theorem a in \cite{Kirsc1993}, which considered how the total field depends on the incident plane waves, from a smooth enough periodic surface. We will show that when the incident field satisfies some certain conditions, the Bloch transform of the total field depends analytically on the quasi-periodicities in one periodic cell except for a countable number of points, while in the neighborhood of each point, a square-root like singularity exists. We also give some examples to show that the conditions are satisfied by a large number of commonly used incident fields. This result also provides a probability to improve the numerical solution of this kind of problems, which is expected to be discussed in our future papers.
\end{abstract}

\section{Introduction}

The problems that quasi-periodic incident fields scattered by periodic surfaces have been well studied in the past few years. A classical way to handle this kind of problems, which are defined in unbounded domains, is to reduce the problems into one periodic cell,  see \cite{Kirsc1993,Bao1994, Bao1995a, Bao2000}. However, when the incident field is no longer (quasi-) periodic, or the surface is no longer periodic, the scattered field is no longer (quasi-)periodic,  the classical way fails to work, so new  methods are required for these more difficult problems. Another way is to treat this kind of problems as incident fields scattered by rough surfaces, see \cite{Chand1998,Chand1998a,Chand1999,Chand1999a}. In recent years, the Floquet-Bloch based method was applied to analyze the well-posedness of this kind of problems,  see \cite{Coatl2012,Hadda2016,Lechl2016}.  The Bloch transform build up a "bridge" between the non-periodic scattering problem and a family of quasi-periodic scattering problems, which also provided a computational method for numerical solutions. In these papers, the properties of the Bloch transformed total fields are important both theoretically and numerically, thus we will discuss some more properties in this paper.

This Floquet-Bloch based method was studied theoretically in \cite{Lechl2015e} and \cite{Lechl2016}, for non-periodic incident fields, e.g. Herglotz wave functions or Green's functions, scattered by (locally perturbed) periodic surfaces. Based on the theoretical results, a numerical method was established to solve the scattering problems with a non-periodic incident field with a periodic surface, for cases in 2D space, see \cite{Lechl2016a} and for  3D space, see \cite{Lechl2016b}. Later on, the numerical method was extended for scattering problems with a locally perturbed surface in 2D space in \cite{Lechl2017}. The method was also applied to locally perturbed periodic waveguide problems in \cite{Hadda2016}. 

In the numerical solution of the scattering problems with (locally perturbed) periodic surfaces, for a chosen discrete space, the convergence rate depends on the regularity of the Bloch transformed total field. To improve the numerical method, a more detailed study of the regularity is required. In the paper \cite{Kirsc1993}, Kirsch has studied how the total fields depend on the incident angle and wave number of incident plane waves, from smooth enough periodic surfaces. The method adopted in this paper is the perturbation theory. Inspired by this idea, we may extend the result to the dependence of the quasi-periodicity of the Bloch transformed total field, when the incident field and its Bloch transform satisfy some certain conditions, and the surface is locally perturbed. In future, it is hopefully that the result could be helpful to improve the numerical method to solve such kind of scattering problems, or might be extended to solve problems in higher dimensional spaces.

The rest of the paper is organized as follows. In Section 2,  the known results of the locally perturbed periodic problems will be shown. In Section 3, we will study extend the result in \cite{Kirsc1993} to the non-perturbed cases, and finally, the result will be extended to the locally perturbed case in Section 4. For a more general case, we will show some results in Section 5. The definition and some properties of the Bloch transform will be shown in the Appendix.

\section{Locally perturbed periodic problems}\label{sec:review}

\begin{figure}[htb]
\centering
\includegraphics[width=0.7\textwidth]{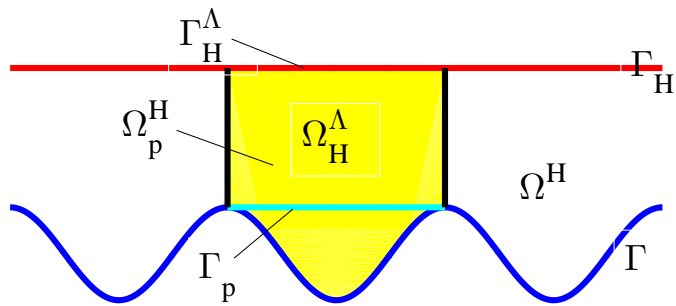}
\caption{Physical structures of the scattering problems.}
\end{figure}
\subsection{Notations and domains}

Let $\zeta$ be a Lipschitz continuous periodic function with period $\Lambda$, and the periodic surface is defined by
\begin{equation*}
\Gamma:=\left\{(x_1,\zeta(x_1)):\,x_1\in\R\right\}.
\end{equation*} 
Let $\Lambda^*=2\pi/\Lambda$, define the periodic cells by
\begin{equation*}
\W=\left(-\frac{\Lambda}{2},\frac{\Lambda}{2}\right],\quad\Wast=\left(-\frac{\Lambda^*}{2},\frac{\Lambda^*}{2}\right]=\left(-\frac{\pi}{\Lambda},\frac{\pi}{\Lambda}\right].
\end{equation*}
Let $\zeta_p$ be a Lipschitz local perturbation of $\zeta$ such that $\rm{supp}(\zeta_p-\zeta)$ is a compact domain in $\R$. For simplicity, assume that $\rm{supp}(\zeta_p-\zeta)\subset \W$, then define the locally perturbed surface by
\begin{equation*}
\Gamma_p:=\{(x_1,\zeta_p(x_1)):\,x_1\in\R\}.
\end{equation*}
We can define the space above the surfaces by
\begin{eqnarray*}
&&\Omega:=\{(x_1,x_2):\,x_1\in\R,\,x_2>\zeta(x_1)\}\\
&&\Omega_p:=\{(x_1,x_2):\,x_1\in\R,\,x_2>\zeta_p(x_1)\}.
\end{eqnarray*}
For simplicity, in this paper, we assume that $\Gamma$ and $\Gamma_p$ are all above the straight line $\{x_2=0\}$, and $H$ be a positive number such that $H>\max\{\|\zeta\|_\infty,\|\zeta_p\|_\infty\}$. Define the line $\Gamma_H=\R\times\{H\}$ and the spaces with between the surfaces $\Gamma$ or $\Gamma_p$ and $\Gamma_H$
\begin{eqnarray*}
&&\Omega_H:=\{(x_1,x_2):\,x_1\in\R,\,\zeta(x_1)<x_2<H\}\\
&&\Omega^p_H:=\{(x_1,x_2):\,x_1\in\R,\,\zeta_p(x_1)<x_2<H\}.
\end{eqnarray*}
We can also define the curves and domains in one periodic cell: 
\begin{eqnarray*}
&&\Gamma^\Lambda=\Gamma\cap\W, \quad\Gamma^\Lambda_H=\Gamma_H\cap\W;\\
&&\Omega^\Lambda=\Omega\cap \W\times \R,\quad\Omega^\Lambda_H=\Omega_H\cap \W\times\R.
\end{eqnarray*}

\subsection{Introduction to scattering problems}

Given an incident field $u^i$ that satisfies 
\begin{equation}
\Delta u^i+k^2 u^i=0\quad\text{in} \Omega_p,
\end{equation}
then it is scattered by the surface $\Gamma_p$, so there is a scattered field $u^s$ such that it satisfies the Helmholtz equation 
\begin{equation}
\Delta u^s+ k^2 u^s=0\quad \text{ in }\Omega_p
\end{equation}
and the Dirichlet boundary condition
\begin{equation}
u^s=-u^i\quad\text{ on }\Gamma_p.
\end{equation}
\begin{remark}
In this paper, we only take the sound-soft surface as an example. In fact, the results could be extended to problems with some other different settings, for example, the impedance boundary conditions, or inhomogeneous mediums.
\end{remark}
To guarantee that $u^s$ is an out-going wave, i.e., $u^s$ is propagating upwards, it is required that $u^s$ satisfies the {\em Upward Propagating Radiation Condition (UPRC)} in the domain above $\Gamma_H$
\begin{equation}
u^s(x)=\frac{1}{2\pi}\int_\R e^{\i x_1 \xi+\i\sqrt{k^2-|\xi|^2}(x_2-H)}\hat{u}^s(\xi,H)\d\xi\quad x_2\geq H,
\end{equation}
where $\hat{u}^s(\xi,H)$ is the Fourier transform of $u^s(\cdot,H)$. The UPRC is equivalent to 
\begin{equation}
\frac{\partial u^s}{\partial x_2}=T^+ u^s\quad\text{ on }\Gamma_H,
\end{equation}
where $T^+$ is the Dirichlet-to-Neumann map defined by
\begin{equation}
(T^+\phi)(x_1)=\frac{\i}{2\pi}\int_\R \sqrt{k^2-|\xi|^2}e^{\i x_1\xi}\hat{\phi}(\xi)\d\xi,\quad
\end{equation}
where $\hat{\phi}$ is the Fourier transform of the function $\phi$. It has been prove in \cite{Chand2010} that the operator $T^+$ is bounded from $H^{1/2}_r(\Gamma_H)$ to $H^{-1/2}_r(\Gamma_H)$ for any $|r|<1$. Define the total field $u=u^i+u^s$, then it satisfies the following equations
\begin{eqnarray}
\Delta u+k^2 u&=&0\quad\text{ in } \Omega_p\\
u&=&0\quad\text{ on }\Gamma_p\\
\frac{\partial u}{\partial x_2}&=&T^+u+\left[\frac{\partial u^i}{\partial x_2}-T^+u^i\right]\quad\text{ on }\Gamma_H.
\end{eqnarray}
We can formulate the variational problem, i.e., for any $u^i\in H_r^1(\Omega^p_H)$, seek for the weak solution $u\in \widetilde{H}^1_r(\Omega_H^p)$ such that for any test function $\phi\in H^1(\Omega_H^p)$ with a compact support, satisfies the variational equation
\begin{equation}\label{eq:var_origional}
\int_{\Omega_H^p}\left[\nabla u\cdot\nabla \overline{\phi}-k^2u\overline{\phi}\right]\d x-\int_{\Gamma_H}T^+(u|_{\Gamma_H}) \overline{\phi}\d s=\int_{\Gamma_H}\left[\frac{\partial u^i}{\partial x_2}-T^+(u^i|_{\Gamma_H})\right]\overline{\phi}\d s.
\end{equation} 

\begin{remark}
The space $\widetilde{H}^1_r(\Omega_H^p)$ with a tilde is defined as a subspace of $H^1_r(\Omega_H^p)$, i.e., $\widetilde{H}^1_r(\Omega_H^p):=\left\{\phi\in H^1_r(\Omega_H^p):\,\phi\big|_{\Gamma_p}=0\right\}$. In the rest of this paper, we will always use the tilde to indicate the space of functions that satisfies homogeneous Dirichlet boundary conditions. 
\end{remark}

In \cite{Chand2010}, the uniquely solubility of the variational form \eqref{eq:var_origional} was proved.
\begin{theorem}
For any $|r|<1$, given any $u^i\in H^1_r(\Omega_H^p)$, there is a unique $u\in \widetilde{H}^1_r(\Omega_H^p)$ of the variational problem \eqref{eq:var_origional}.
\end{theorem}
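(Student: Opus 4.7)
The plan is to treat this as a standard result in the rough-surface scattering framework of \cite{Chand2010}, since once the local perturbation $\zeta_p$ is in place the geometry $\Omega_H^p$ is no longer periodic and Floquet--Bloch arguments for the unperturbed strip cannot be applied directly. I would split the proof into three pieces: boundedness of the sesquilinear form and right-hand side, uniqueness via a radiation-condition argument, and existence through an a priori estimate combined with a truncation procedure.

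Denote the left-hand side of \eqref{eq:var_origional} by $a(u,\phi)$. Boundedness on $\widetilde{H}^1_r(\Omega_H^p)\times H^1_{-r}(\Omega_H^p)$ follows from the Cauchy--Schwarz inequality on the volume terms, using the natural duality between $H^1_r$ and $H^1_{-r}$, together with the stated continuity $T^+:H^{1/2}_r(\Gamma_H)\to H^{-1/2}_r(\Gamma_H)$ for $|r|<1$ and the trace theorem; the same estimates show that the right-hand side defines a bounded antilinear functional on $H^1_{-r}(\Omega_H^p)$ for every $u^i\in H^1_r(\Omega_H^p)$. For uniqueness I would set $u^i=0$, extend $u$ above $\Gamma_H$ by the UPRC, and apply a Rellich-type identity on the strip between $\Gamma_p$ and $\Gamma_H$: taking the imaginary part of $\int \overline{\partial_{x_2} u}\,(\Delta u+k^2 u)\d{x}$, exploiting the Dirichlet datum on $\Gamma_p$, and rewriting the boundary contribution on $\Gamma_H$ through the Fourier representation of $T^+$, one sees that the propagating part of $\widehat{u}(\cdot,H)$ must vanish; standard unique continuation then forces $u\equiv 0$.

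Step three is the main obstacle: because $\Omega_H^p$ is unbounded in $x_1$, G{\aa}rding's inequality does not yield the Fredholm alternative directly, and no compact-embedding shortcut is available. I would derive an a priori bound $\|u\|_{\widetilde{H}^1_r(\Omega_H^p)}\le C\,\|u^i\|_{H^1_r(\Omega_H^p)}$ with $C$ independent of the data, and then combine it with existence on a sequence of truncated problems posed on finite horizontal slabs with suitable lateral conditions, passing to the limit as the truncation expands to $\R$. The delicate point is to track the weight $(1+x_1^2)^{r/2}$ through both the energy identity and the non-local operator $T^+$ so that the estimate stays uniform for $|r|<1$. Once this a priori bound is available, the compactly supported perturbation $\zeta_p-\zeta$ is a compact correction of the purely periodic problem (whose well-posedness is classical), so the uniqueness established above together with a Riesz--Schauder argument on the bounded piece completes the proof.
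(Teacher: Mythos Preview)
The paper does not prove this theorem at all: it is quoted verbatim as a known result from \cite{Chand2010}, with the sentence ``In \cite{Chand2010}, the uniquely solubility of the variational form \eqref{eq:var_origional} was proved'' immediately preceding the statement. There is therefore no ``paper's own proof'' to compare against; the theorem functions here purely as background input.

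Your sketch is broadly faithful to the Chandler--Wilde rough-surface programme you cite: boundedness via the mapping property of $T^+$ on weighted trace spaces, uniqueness from a Rellich-type identity that kills the propagating Fourier modes on $\Gamma_H$, and existence through uniform a~priori estimates plus a limiting-absorption or finite-section argument. One remark: the final paragraph mixes two distinct strategies. The Chandler--Wilde machinery treats $\Gamma_p$ directly as a generic Lipschitz graph and never invokes periodicity of the unperturbed surface; once you have the uniform a~priori bound and the truncation limit, the proof is complete, and the subsequent appeal to ``compact correction of the purely periodic problem'' together with a Riesz--Schauder argument is redundant (and would in any case require the periodic solvability in $H^1_r$ as a separate input). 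If you want to keep the perturbation-of-periodic viewpoint, that is a genuinely different route---closer in spirit to what the present paper does later via the Bloch transform---but then the a~priori/truncation step is unnecessary. Pick one of the two lines and drop the other.
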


\subsection{Bloch transformed fields}

 As the Bloch transform only works on periodic domains, we have to transform the variational problem \eqref{eq:var_origional} defined in a locally perturbed periodic domain $\Omega_H^p$ into the one defined in a periodic domain. Following \cite{Lechl2016,Lechl2017}, define a  diffeomorphism $\Phi_p$ such that maps the periodic domain $\Omega_{H_0}$ to $\Omega_{H_0}^p$, where $\max\left\{\|\zeta\|_\infty,\|\zeta_p\|_\infty\right\}<H_0<H$, by
\begin{equation*} 
\Phi_p(x_1,x_2)=\left(x_1,x_2+\frac{(x_2-H)^3}{(\zeta(x_1)-H)^3}(\zeta_p(x_1)-\zeta_(x_1)\right),\quad (x_1,x_2)^\top\in \Omega_{H_0}.
\end{equation*}
The operator $\Phi_p-I_2$ (where $I_2$ is the identity operator in $\R^2$) has a support which is a subset of $\Omega^\Lambda_{H_0}\subset\Omega^\Lambda_H$. Then let $u_T=u\circ\Phi_p$, then it satisfies the following variational problem
\begin{equation}\label{eq:var_trans}
\begin{aligned}
&&\int_{\Omega_H}\left[A_p\nabla u_T\cdot\nabla\overline{\phi_T}-k^2 c_p u_T\overline{\phi_T}\right]\d x-\int_{\Gamma_H}T^+(u_T|_{\Gamma_H})\overline{\phi_T}\d s\\
&&\quad=\int_{\Gamma_H}\left[\frac{\partial u^i}{\partial x_2}-T^+(u^i|_{\Gamma_H})\right]\overline{\phi_T}\d s
\end{aligned}
\end{equation}
for all $\phi_T\in \widetilde{H}^1(\Omega_H)$ with compact support, where
\begin{eqnarray*}
&&A_p(x)=|\det\nabla\Phi_p(x)|\left[(\nabla\Phi_p(x))^{-1}((\nabla\Phi_p(x))^{-1})^T\right]\in L^\infty(\Omega_H,\R^{2\times 2}),\\
&&c_p(x)=|\det\nabla\Phi_p(x)|\in L^\infty(\Omega_H).
\end{eqnarray*}
From the definition of $\Phi_p$, the supports of $A_p-I$ and $c_p-1$ are both subsets of $\Omega^\Lambda_H$.

Suppose $r\geq 0$, let $w=\J_\Omega u_T$, from the mapping property of the Bloch transform and $u_T\in H_r^1(\Omega_H)\subset H^1(\Omega_H)$, $w\in L^2(\Wast;\widetilde{H}^1_\alpha(\Omega^\Lambda_H))$, and satisfies the following variational problem for any $\phi\in L^2(\Wast;\widetilde{H}^1_\alpha(\Omega^\Lambda_H))$
\begin{equation}\label{eq:var_bloch}
\begin{aligned}
\int_\Wast a_\alpha(w(\alpha,\cdot),\phi(\alpha,\cdot))\d\alpha+b(\J^{-1}_\Omega w,\J^{-1}_\Omega \phi)=\int_\Wast\int_{\Gamma^\Lambda_H}f(\alpha,\cdot)\overline{\phi}(\alpha,\cdot)\d s\d\alpha,
\end{aligned}
\end{equation}
where 
\begin{eqnarray*}
&&\begin{aligned}
a_\alpha(w(\alpha,\cdot),\phi(\alpha,\cdot))&=\int_{\Omega^\Lambda_H}\left[\nabla w(\alpha,\cdot)\cdot\nabla\overline{\phi}(\alpha,\cdot)-k^2w(\alpha,\cdot)\overline{\phi}(\alpha,\cdot)\right]\d x\\&-\int_{\Gamma_H^\Lambda}T^+_\alpha\left[w(\alpha,\cdot)\Big|_{\Gamma_H^\Lambda}\right]\overline{\phi}(\alpha,\cdot)\d s
\end{aligned}\\
&&b(\xi,\psi)=\left[\frac{\Lambda}{2\pi}\right]^{1/2}\int_{\Omega^\Lambda_H}(A_p-I)\nabla\xi\cdot\nabla\overline{\psi}\d s-k^2\left[\frac{\Lambda}{2\pi}\right]^{1/2}\int_{\Omega^\Lambda_H}(c_p-1)\xi\overline{\psi}\d x,\\
&&f(\alpha,\cdot)=\frac{\partial \J_\Omega u^i(\alpha,\cdot)}{\partial x_2}-T^+_\alpha\left[(\J_\Omega u^i)(\alpha,\cdot)|_{\Gamma^\Lambda_H}\right],
\end{eqnarray*}
where the $\alpha$-quasi-periodic DtN map $T^+_\alpha$ is defined by
\begin{equation}
T^+_\alpha(\psi)=\i\sum_{j\in\Z}\sqrt{k^2-|\Lambda^* j-\alpha|^2}\hat{\psi}(j)e^{\i(\Lambda^*j-\alpha)x_1},\quad \psi=\sum_{j\in\Z}\hat{\psi}(j)e^{\i(\Lambda^*j-\alpha)x_1}.
\end{equation}

At the end of this subsection, we will list some of  the useful results in \cite{Lechl2017}. The first one is the equivalence between the original problem \eqref{eq:var_origional} and the one with Bloch transform \eqref{eq:var_bloch}.

\begin{theorem}\label{th:reg1}
Assume $u^i\in H^1_r(\Omega_H^p)$ for some $r\in[0,1)$, then $u_T\in \widetilde{H}^1_r(\Omega_H)$ satisfies \eqref{eq:var_origional} if and only if $w=\J_\Omega u_T\in H_0^r(\Wast;\widetilde{H}^1_\alpha(\Omega^\Lambda_H))$ satisfies \eqref{eq:var_bloch}.
\end{theorem}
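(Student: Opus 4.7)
The plan is to prove both directions simultaneously by transforming \eqref{eq:var_trans} term by term under $\J_\Omega$, since the equivalence between $u_T$ satisfying the pulled-back variational problem and $w=\J_\Omega u_T$ satisfying the fibered Bloch problem \eqref{eq:var_bloch} is just a restatement under the Bloch isomorphism. The ingredients I would invoke from the appendix are: (i) $\J_\Omega$ is an isomorphism from $H^1_r(\Omega_H)$ onto $H^r_0(\Wast;\widetilde{H}^1_\alpha(\Omega^\Lambda_H))$ (respecting the homogeneous Dirichlet trace on $\Gamma$); (ii) a Parseval-type identity $\int_{\Omega_H} f\overline{g}\,dx=\int_{\Wast}\int_{\Omega^\Lambda_H}(\J_\Omega f)(\alpha,\cdot)\,\overline{(\J_\Omega g)(\alpha,\cdot)}\,dx\,d\alpha$; (iii) the commutation of $\J_\Omega$ with partial derivatives; and (iv) the intertwining $(\J_\Omega T^+\psi)(\alpha,\cdot)=T^+_\alpha\bigl((\J_\Omega\psi)(\alpha,\cdot)\bigr)$, which follows by direct Fourier-series diagonalization of both operators.

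The first step is to split $A_p=I+(A_p-I)$ and $c_p=1+(c_p-1)$ in \eqref{eq:var_trans}. The ``$I$'' and ``$1$'' pieces, together with the DtN term on $\Gamma_H$, form translation-covariant periodic operators, so applying (ii)--(iv) with $\phi:=\J_\Omega\phi_T$ converts them exactly into $\int_{\Wast}a_\alpha(w(\alpha,\cdot),\phi(\alpha,\cdot))\,d\alpha$. The remaining pieces $(A_p-I)$ and $(c_p-1)$ are compactly supported in $\Omega^\Lambda_H$, so the corresponding integrals depend only on $u_T|_{\Omega^\Lambda_H}$ and $\phi_T|_{\Omega^\Lambda_H}$, which one re-expresses through the inverse Bloch series $\J^{-1}_\Omega w$ and $\J^{-1}_\Omega\phi$; the normalization constant $[\Lambda/(2\pi)]^{1/2}$ in the definition of $b$ is precisely the factor produced by substituting the inverse-Bloch series into a single period. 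The right-hand side is treated identically: $\partial/\partial x_2-T^+$ is a translation-covariant operator on $\Gamma_H$, so its fiberwise Bloch image at $\alpha$ is exactly $f(\alpha,\cdot)$.

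To close the equivalence I would run a density argument for test functions. In \eqref{eq:var_trans}, $\phi_T$ ranges over compactly supported elements of $\widetilde{H}^1(\Omega_H)$, and the image of this set under $\J_\Omega$ is dense in $L^2(\Wast;\widetilde{H}^1_\alpha(\Omega^\Lambda_H))$; conversely, any test function in the latter space can be approximated in the $L^2(\Wast;H^1_\alpha(\Omega^\Lambda_H))$ norm by $\J_\Omega$ of compactly supported test functions. Since the sesquilinear forms on both sides are continuous in the test function in the matched norms, the identity transfers in either direction. The regularity statement in (i), combined with $u_T=u\circ\Phi_p$ and the fact that $\Phi_p-I_2$ is compactly supported in $\Omega^\Lambda_{H_0}$, then gives the claimed equivalence of regularity classes $u_T\in\widetilde{H}^1_r(\Omega_H)\Longleftrightarrow w\in H^r_0(\Wast;\widetilde{H}^1_\alpha(\Omega^\Lambda_H))$.

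The main technical obstacle, I expect, is the careful bookkeeping for the perturbation bilinear form: verifying that the factor $[\Lambda/(2\pi)]^{1/2}$ is exactly what emerges from substituting the inverse-Bloch series into $(A_p-I)$- and $(c_p-1)$-weighted integrals on the compact support, and confirming that the intertwining $\J_\Omega T^+=T^+_\alpha\J_\Omega$ holds in the genuine duality pairing $\langle\cdot,\cdot\rangle_{H^{-1/2}_\alpha,H^{1/2}_\alpha}$ on $\Gamma^\Lambda_H$ rather than only on smooth functions, so that the trace term in $a_\alpha$ is well defined for $w,\phi\in\widetilde{H}^1_\alpha(\Omega^\Lambda_H)$. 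Once these normalization and boundary-pairing issues are settled, the remainder is an unwinding of definitions.
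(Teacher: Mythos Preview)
The paper does not actually prove this theorem; it is stated as one of ``the useful results in \cite{Lechl2017}'' and quoted without argument. Your proposal is the standard proof of this equivalence and is essentially correct: split the coefficients into their periodic and compactly supported parts, use the unitarity/Parseval property and commutation of $\J_\Omega$ with derivatives to fibre the periodic part into $\int_{\Wast}a_\alpha(\cdot,\cdot)\,d\alpha$, use the intertwining $\J_\Omega T^+ = T^+_\alpha \J_\Omega$ for the boundary term, and keep the compactly supported perturbation as $b(\J_\Omega^{-1}w,\J_\Omega^{-1}\phi)$. The density argument for test functions and the regularity correspondence via the Bloch isomorphism close the equivalence. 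One small remark: the theorem as stated refers to \eqref{eq:var_origional}, but $u_T$ is the pulled-back field and the relevant variational identity is \eqref{eq:var_trans}; you correctly work with the latter.
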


Then the variational problem is uniquely solvable for a Liptschiz interface $\Gamma_p$. 
\begin{theorem}\label{th:reg2}
If $\zeta_p$ is Lipschitz continuous, then \eqref{eq:var_bloch} is uniquely solvable in $H^r_0(\Wast;\widetilde{H}^1_\alpha(\Omega^\Lambda_H))$ for all $u^i\in H^1_r(\Omega_H^p)$, $r\in[0,1)$.
\end{theorem}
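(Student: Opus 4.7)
The plan is to chain together the two ingredients already at hand: the unique solvability of the untransformed problem \eqref{eq:var_origional} in $\widetilde{H}^1_r(\Omega_H^p)$ (the theorem from \cite{Chand2010} stated just before) and the equivalence supplied by Theorem \ref{th:reg1}. The intermediate bridge is the pull-back by the diffeomorphism $\Phi_p$, under which the original variational problem \eqref{eq:var_origional} and the transformed one \eqref{eq:var_trans} are equivalent, so essentially no new PDE analysis is needed.

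For existence, given $u^i\in H^1_r(\Omega_H^p)$ I would invoke the first theorem to obtain a unique $u\in\widetilde{H}^1_r(\Omega_H^p)$ solving \eqref{eq:var_origional}. Setting $u_T:=u\circ\Phi_p$ and using that $\Phi_p$ is bi-Lipschitz and equal to the identity outside the compact set $\Omega^\Lambda_{H_0}$, one checks that $u_T\in\widetilde{H}^1_r(\Omega_H)$ (weighted norms are only affected on a compact set, so the substitution is a bounded operation) and that $u_T$ satisfies \eqref{eq:var_trans} via the standard change-of-variables formula which is precisely the definition of $A_p$ and $c_p$. Applying Theorem \ref{th:reg1} in the forward direction then yields $w:=\J_\Omega u_T\in H^r_0(\Wast;\widetilde{H}^1_\alpha(\Omega^\Lambda_H))$, which by construction is a solution of \eqref{eq:var_bloch}.

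For uniqueness I would run the same chain in reverse. If $w\in H^r_0(\Wast;\widetilde{H}^1_\alpha(\Omega^\Lambda_H))$ solves the homogeneous \eqref{eq:var_bloch}, Theorem \ref{th:reg1} produces $u_T:=\J_\Omega^{-1}w\in\widetilde{H}^1_r(\Omega_H)$ satisfying \eqref{eq:var_trans} with $u^i=0$; pushing forward by $\Phi_p^{-1}$ gives $u:=u_T\circ\Phi_p^{-1}\in\widetilde{H}^1_r(\Omega_H^p)$ that solves the homogeneous \eqref{eq:var_origional}. The first theorem forces $u=0$, whence $u_T=0$ and $w=0$.

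The main technical points, rather than genuine obstacles, are (i) verifying that the map $u\mapsto u\circ\Phi_p$ is a bounded isomorphism $\widetilde{H}^1_r(\Omega_H^p)\to\widetilde{H}^1_r(\Omega_H)$, which is immediate because $\Phi_p-I_2$ has compact support so the weight $(1+x_1^2)^{r/2}$ is left untouched at infinity and only the chain-rule prefactor on a compact region matters, and (ii) the derivation of \eqref{eq:var_trans} from \eqref{eq:var_origional}, which is bookkeeping given the defining formulas for $A_p$ and $c_p$. The real analytical content, namely the treatment of the DtN operator $T^+$ on weighted spaces and the equivalence between the original and Bloch-transformed formulations, is encapsulated in Theorem \ref{th:reg1} and in the \cite{Chand2010} result, and the present theorem is obtained from them by transport along $\Phi_p$ and $\J_\Omega$.
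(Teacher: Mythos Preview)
The paper does not supply a proof of this theorem; it is listed among ``useful results in \cite{Lechl2017}'' and simply quoted. Your argument is correct and is precisely the natural route: transport along the compactly supported diffeomorphism $\Phi_p$ converts \eqref{eq:var_origional} into \eqref{eq:var_trans} without disturbing the weight at infinity, Theorem~\ref{th:reg1} identifies \eqref{eq:var_trans} with \eqref{eq:var_bloch} via $\J_\Omega$, and the \cite{Chand2010} well-posedness result closes both existence and uniqueness. (Note that in the statement of Theorem~\ref{th:reg1} the reference to \eqref{eq:var_origional} is a typo for \eqref{eq:var_trans}; your reading is the intended one.) This is almost certainly how the proof in \cite{Lechl2017} runs as well, since those are exactly the ingredients assembled there.
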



There is an equivalent form of \eqref{eq:var_bloch} if the $u^i\in H_r^1(\Omega_H^p)$ for $r\in(1/2,1)$.

\begin{theorem}\label{th:continuous}
If $\zeta_p$ is Lipschitz continuous and $u^i\in H^1_r(\Omega_H^p)$ for $r\in(1/2,1)$, then $w\in H_0^r(\Wast;H^1_\alpha(\Omega^\Lambda_H))$ equivalently satisfies for all $\alpha\in\Wast$ and $\phi_\alpha\in H^1_\alpha(\Omega^\Lambda_H)$ that 
\begin{equation}
a_\alpha(w(\alpha,\cdot),\phi_\alpha)+b(\J^{-1}_\Omega w,\phi_\alpha)=\int_{\Gamma^\Lambda_H}f(\alpha,\cdot)\overline{\phi_\alpha}\d s,
\end{equation}
\end{theorem}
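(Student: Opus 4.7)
The plan is to exploit two facts: (i) the Sobolev embedding $H^r_0(\Wast; X) \hookrightarrow C(\overline{\Wast}; X)$ valid whenever $r > 1/2$ and $X$ is a Hilbert space, and (ii) a Fubini-type factorization that converts the direct-integral variational identity \eqref{eq:var_bloch} into a pointwise-in-$\alpha$ identity. Applied with $X = \widetilde{H}^1_\alpha(\Omega^\Lambda_H)$ (identified with a single fixed periodic Sobolev space via the multiplier $e^{i\alpha x_1}$), the embedding guarantees that $w(\alpha, \cdot)$ and the right-hand side datum $f(\alpha, \cdot)$ are \emph{continuous} functions of $\alpha \in \overline{\Wast}$. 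This continuity is what allows one to upgrade an almost-everywhere identity to one that holds for every $\alpha$, which is the whole content the theorem asks for.

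For the forward direction ``integrated $\Rightarrow$ pointwise,'' I would substitute tensor-product test functions $\phi(\alpha, x) = \overline{g(\alpha)}\, \phi_\alpha(x)$ into \eqref{eq:var_bloch}, with $g$ ranging over $C^\infty(\overline{\Wast})$ (dense in $L^2(\Wast)$) and $\phi_\alpha$ a prescribed family of $\alpha$-quasiperiodic test fields, for instance $\phi_\alpha(x) = \psi(x)\, e^{-i\alpha x_1}$ with $\psi$ periodic. The $a_\alpha$-piece is already a direct integral and factors immediately as $\int_\Wast g(\alpha)\, a_\alpha(w(\alpha, \cdot), \phi_\alpha)\, d\alpha$, and the boundary term on the right factors in the same way. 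The nontrivial step is rewriting $b(\J^{-1}_\Omega w, \J^{-1}_\Omega \phi)$: since $A_p - I$ and $c_p - 1$ are supported in the single base cell $\Omega^\Lambda_H$, and since on this cell $\J^{-1}_\Omega \phi(x)$ reduces to a constant multiple of $\int_\Wast \phi(\alpha, x)\, d\alpha$, a Fubini interchange pulls the $\alpha$-integration outside the spatial integral and produces $\int_\Wast g(\alpha)\, b(\J^{-1}_\Omega w, \phi_\alpha)\, d\alpha$, the prefactor from the normalization of $\J_\Omega$ being absorbed into the definition of $b$. Arbitrariness of $g$ then gives the pointwise identity for a.e.\ $\alpha$, and the continuity from the first step promotes it to every $\alpha \in \Wast$.

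The converse direction is obtained simply by multiplying the pointwise identity by $g(\alpha)$, integrating in $\alpha$, and running the Fubini manipulation in reverse; density of tensor-product test functions in $L^2(\Wast; \widetilde{H}^1_\alpha(\Omega^\Lambda_H))$ then recovers \eqref{eq:var_bloch} on all admissible test fields. The main obstacle I expect is not a hard estimate but careful bookkeeping: one must dispose of the $\alpha$-dependence of the space $\widetilde{H}^1_\alpha(\Omega^\Lambda_H)$ by pulling out the multiplier $e^{i\alpha x_1}$, verify density of tensor-product test functions in the resulting fixed-space direct integral, and track the Bloch-transform normalization constants so that the form $b$ appearing in the pointwise statement is literally the same $b$ of \eqref{eq:var_bloch}. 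Once these details are organized the argument is essentially an exercise in direct-integral calculus.
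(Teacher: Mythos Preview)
The paper does not actually prove Theorem~\ref{th:continuous}; it is quoted without proof as one of ``the useful results in \cite{Lechl2017}'' at the end of Section~2.3, alongside Theorems~\ref{th:reg1} and~\ref{th:reg2}. So there is no proof in the paper to compare against.

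That said, your outline is the standard argument one expects for this statement and is essentially correct. The two pillars you identify are exactly the right ones: the Sobolev embedding $H^r_0(\Wast;X)\hookrightarrow C(\overline{\Wast};X)$ for $r>1/2$ (after flattening the $\alpha$-dependent target spaces via the multiplier $e^{\i\alpha x_1}$), and the Fubini/density reduction using tensor-product test functions. Your treatment of the $b$-term is also the right idea --- the compact support of $A_p-I$ and $c_p-1$ in $\Omega^\Lambda_H$ together with the formula $(\J^{-1}_\Omega\phi)(x)=[\Lambda/2\pi]^{1/2}\int_{\Wast}\phi(\alpha,x)\,\d\alpha$ for $x\in\Omega^\Lambda_H$ is what makes the factorization go through; the $[\Lambda/2\pi]^{1/2}$ already built into the definition of $b$ is precisely what absorbs the normalization constant. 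One minor slip: with the paper's convention $\alpha$-quasiperiodic functions are $\psi(x)e^{\i\alpha x_1}$ with $\psi$ periodic, not $e^{-\i\alpha x_1}$; this does not affect the argument. For the upgrade from a.e.\ to every $\alpha$ you should also note that $\alpha\mapsto T^+_\alpha$ is continuous as a map into $\mathcal{L}(H^{1/2}_\alpha,H^{-1/2}_\alpha)$ (the symbols $\sqrt{k^2-|\Lambda^*j-\alpha|^2}$ are continuous in $\alpha$), so that $a_\alpha$, $f(\alpha,\cdot)$ and hence the whole identity depend continuously on $\alpha$.
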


\section{Scattering from non-perturbed periodic surfaces}

In Section \ref{sec:review}, we have shown some known results for the solvability  of the scattering problems from locally perturbed periodic surfaces. The regularity of the Bloch transform of the total field $w(\alpha,\cdot)$  depends on the decaying rate of the incident field. In this section, we will turn to a further study of the structure of the Bloch transformed field, for non-perturbed periodic surfaces. Based on the results in this section, the discussion for perturbed surfaces will be carried out in the next section.

The paper \cite{Kirsc1993} provided a very good inspiration for the problem in this section. Firstly, we will define some sets and functions spaces. Define the set of all singularities with a fixed wave number $k$ and a period $\Lambda$ by
\begin{equation*}
\S:=\{\alpha\in\R:\,\exists\, n\in\Z,\,s.t.,\,|\Lambda^*n-\alpha|=k\},
\end{equation*}
which is, definitely, a countable set located periodically on the real line. Let $\I\subset\R$ be an interval (the case that $\I=\R$ is included), $W\subset\R^2$ be a bounded periodic cell of a periodic domain, and define the space of functions defined in the domain $\I\times\Omega^\Lambda_H$ that depends analytically on the first variable
\begin{equation*}
\begin{aligned}
C^\omega\left(\I;S(W)\right):=\Bigg\{f\in\mathcal{D}'(\I\times W):\,\forall \,\alpha_0\in\I,\,\exists\,\delta>0,\,s.t.,\,\forall\alpha\,\in (\alpha_0-\delta,\alpha_0+\delta)\cap\I,\Bigg.\\\left.
\exists\, C>0,\, f_n\in S(W)\,s.t.,\,f(\alpha,x)=\sum_{n=0}^\infty (\alpha-\alpha_0)^n f_n(x),\, \left\|f_n\right\|_{ S(W)}\leq C^n\right\},
\end{aligned}
\end{equation*}
where $S(W)$ is a Sobolev space defined on $W$. In this paper, $S(W)$ is either $H^n(\Omega^\Lambda_H)$ or $\widetilde{H}^n(\Omega^\Lambda_H)$, $n=0,1,2$. 
Define the set of functions defined in the domain $\I\times W$ that depends $C^n$-continuously on the first variable
\begin{equation*}
\begin{aligned}
C^n(\I;S(W)):=\Bigg\{f\in\mathcal{D}'(\I\times W):\,,\,\forall\alpha\in\I,\,j=0,\dots,n,\,\frac{\partial^j f(\alpha,\cdot)}{\partial\alpha^j}\in S(W),\Bigg.\\\left.\left\|\frac{\partial^j f(\alpha,\cdot)}{\partial\alpha^j}\right\|_{S(W)}\text{ is uniformly bounded for }\alpha\in\I\right\},
\end{aligned}
\end{equation*}
thus we can define the space $C^\infty(\I;S(W))$ in the same way
\begin{equation*}
\begin{aligned}
C^\infty(\I;S(W)):=\Bigg\{f\in\mathcal{D}'(\I\times W):\,,\,\forall\alpha\in\I,\,j=0,1,\dots,\infty,\,\frac{\partial^j f(\alpha,\cdot)}{\partial\alpha^j}\in S(W),\Bigg.\\\left.\left\|\frac{\partial^j f(\alpha,\cdot)}{\partial\alpha^j}\right\|_{S(W)}\text{ is uniformly bounded for }\alpha\in\I\right\},
\end{aligned}
\end{equation*}

With the definitions of the new spaces and sets, Theorem a in \cite{Kirsc1993} could be rewritten in the following form with a fixed wave number $k$.  

\begin{theorem}[\cite{Kirsc1993}, Theorem a]\label{th:kirsch}Suppose the incident field $u^i(\alpha,x)$ is the plain wave, i.e., $u^i(\alpha,x):=e^{\i\alpha x_1-\i\sqrt{k^2-\alpha^2}x_2}$, and it is scattered by a smooth enough sound soft surface.
 The total field, denoted by $u(\alpha,x)$, belongs to the space $C^0\left((-k,k);\widetilde{H}^1(\Omega^\Lambda_H)\right)$.  For any interval $\I\subset(-k,k)\setminus\S$, the total field $u$ belongs to $C^\omega\left(\I;\widetilde{H}^1(\Omega^\Lambda_H)\right)$. Moreover, for any $\alpha_0\in\S\cap(-k,k)$, i.e., there is an $n_0\in\Z$ such that $|\Lambda^*n_0-\alpha|=k$, there is a neighborhood $U$ of $\alpha_0$ and quasi-periodic functions $v,w\in C^\omega\left(U\cap(-k,k); \widetilde{H}^1(\Omega^\Lambda_H)\right)$ such that $u=v+\beta_{n_0}(\alpha)w$, where $\beta_{n_0}=\sqrt{k^2-|\Lambda^* n_0-\alpha|^2}$ with non negative real and imaginary parts.
\end{theorem}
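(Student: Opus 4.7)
The plan is to follow Kirsch's perturbation-theoretic strategy by reformulating the quasi-periodic scattering problem as an analytic family of variational problems parameterized by $\alpha$. Writing $u(\alpha,x) = e^{\i\alpha x_1}\tilde u(\alpha,x)$ with $\tilde u(\alpha,\cdot)$ periodic in $x_1$ on the cell $\Omega^\Lambda_H$, the Helmholtz equation becomes
\[
\Delta\tilde u + 2\i\alpha\,\partial_{x_1}\tilde u + (k^2-\alpha^2)\tilde u = 0,
\]
whose coefficients are polynomial in $\alpha$. The DtN operator conjugated by $e^{-\i\alpha x_1}$ multiplies the $j$-th periodic Fourier coefficient by $\i\sqrt{k^2-|\Lambda^* j-\alpha|^2}$, and the right-hand side reduces, up to the factor $e^{-\i\sqrt{k^2-\alpha^2}H}$, to an entire function of $\sqrt{k^2-\alpha^2}$. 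The whole formulation is thus an operator equation $A(\alpha)\tilde u = F(\alpha)$ on $\tilde H^1_{\mathrm{per}}(\Omega^\Lambda_H)$ in which the dependence on $\alpha$ enters only through the symbols $\sqrt{k^2-|\Lambda^* j-\alpha|^2}$.

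For $\alpha$ in an interval $\I\subset(-k,k)\setminus\S$ none of these symbols vanishes; only finitely many modes propagate and the evanescent tail is uniformly bounded and analytic. Hence $A(\alpha)$ and $F(\alpha)$ are analytic in $\alpha$, and $A(\alpha)$ is Fredholm of index zero because the volume form is coercive (Garding plus compact perturbation) and the DtN boundary term splits into a finite-rank propagating part and a negative-semidefinite evanescent part. Theorem \ref{th:reg2}, specialized to $\zeta_p=\zeta$, combined with the standard equivalence between the Bloch-transformed problem and the family of $\alpha$-quasi-periodic problems, yields pointwise invertibility of $A(\alpha)$ on $\I$. The analytic Fredholm theorem then gives $A(\alpha)^{-1}$ analytic on $\I$, whence $\tilde u = A(\alpha)^{-1}F(\alpha)\in C^\omega(\I;\tilde H^1_{\mathrm{per}}(\Omega^\Lambda_H))$, equivalently $u\in C^\omega(\I;\widetilde H^1(\Omega^\Lambda_H))$.

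Near $\alpha_0\in\S\cap(-k,k)$ there is a unique $n_0\in\Z$ with $|\Lambda^* n_0-\alpha_0|=k$, and exactly the $n_0$-th DtN symbol degenerates. I would split
\[
T^+_\alpha \;=\; T^{+,\mathrm{reg}}_\alpha \;+\; \i\,\beta_{n_0}(\alpha)\,\Pi_{n_0},
\]
with $\Pi_{n_0}$ the projection onto the $n_0$-th Fourier coefficient and $T^{+,\mathrm{reg}}_\alpha$ the analytic remainder at $\alpha_0$. Introducing $\tau:=\beta_{n_0}(\alpha)$, one has $\tau^2 = k^2-|\Lambda^* n_0-\alpha|^2$ polynomial in $\alpha$, so $(\alpha,\tau)$ analytically parameterizes a double cover of a neighborhood $U$ of $\alpha_0$. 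The equation reads $\big(A_0(\alpha)+\i\tau\,\Pi_{n_0}\big)\tilde u = g(\alpha)$ with $A_0,g$ analytic in $\alpha$ alone; analytic Fredholm theory in the joint variable $(\alpha,\tau)$ then produces $\tilde u(\alpha,\tau)$ jointly analytic on a neighborhood of $(\alpha_0,0)$. Decomposing into parts even and odd in $\tau$,
\[
\tilde u(\alpha,\tau) \;=\; \tilde v(\alpha,\tau^2) + \tau\,\tilde w(\alpha,\tau^2),
\]
and recalling that $\tau^2$ is analytic in $\alpha$, yields $u = v+\beta_{n_0}(\alpha)w$ with $v,w\in C^\omega(U\cap(-k,k);\widetilde H^1(\Omega^\Lambda_H))$. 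Global continuity on $(-k,k)$ follows at once: on each interval $\I\subset(-k,k)\setminus\S$ from analyticity, and at each $\alpha_0\in\S$ from $\beta_{n_0}(\alpha_0)=0$ together with boundedness of $w$.

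The principal obstacle is the singular-point analysis. Establishing joint analyticity of $\tilde u(\alpha,\tau)$ on the double cover requires invertibility of $A_0(\alpha)+\i\tau\Pi_{n_0}$ for $(\alpha,\tau)$ in a punctured neighborhood of $(\alpha_0,0)$; this rests on excluding trapped modes at $\alpha_0$ on the two sheets, a statement that is plausible from the radiation condition but that in Kirsch's argument ultimately leans on a limiting-absorption argument and on the smoothness hypothesis on $\zeta$. A secondary technical point is extracting pointwise invertibility of $A(\alpha)$ on each whole interval $\I$ from Theorem \ref{th:reg2}, which requires some care since that theorem is stated for the Bloch-transformed, non-quasi-periodic problem; any residual discrete set of surface-guided modes would have to be absorbed either into $\S$ or ruled out by the regularity of $\zeta$.
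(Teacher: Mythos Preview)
Your plan is correct and tracks the paper's argument closely: conjugate by $e^{-\i\alpha x_1}$ to obtain a periodic unknown, observe that every ingredient of the variational form is analytic in $\alpha$ except the DtN symbols $\sqrt{k^2-|\Lambda^*j-\alpha|^2}$, and isolate the single (or, in degenerate cases, the pair of) vanishing symbol(s) at $\alpha_0\in\S$. The difference lies in how the singular-point step is executed. The paper writes $A(\alpha)=A_0(\alpha)+\beta_{n_0}(\alpha)B$ with $A_0$ analytic and invertible at $\alpha_0$ and $B$ rank one, and then expands $A(\alpha)^{-1}$ directly as a Neumann series in $\beta_{n_0}$; collecting even and odd powers of $\beta_{n_0}$ immediately gives $u=v+\beta_{n_0}w$ with $v,w$ analytic. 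Your route instead passes to the double cover via $\tau=\beta_{n_0}(\alpha)$, invokes analytic Fredholm in $(\alpha,\tau)$, and then splits into parts even and odd in $\tau$. The two arguments are equivalent in content---your even/odd decomposition in $\tau$ is precisely the even/odd splitting of the Neumann series---but the Neumann-series formulation is slightly more economical because it needs only invertibility of $A_0(\alpha_0)$ at the single point, rather than invertibility of $A_0(\alpha)+\i\tau\Pi_{n_0}$ on a punctured two-dimensional neighborhood.

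This also dissolves most of the ``principal obstacle'' you flag. You do not need to exclude trapped modes on both sheets of the cover; you need only that the regular part $A_0(\alpha_0)$ is invertible, which is exactly the well-posedness of the $\alpha_0$-quasi-periodic problem with the $n_0$-th DtN coefficient set to zero. That, in turn, is what the uniqueness theory for the periodic surface delivers (and is where Kirsch uses the smoothness hypothesis). Your secondary worry about pointwise invertibility on each $\I$ is likewise addressed directly in Kirsch's setting without detouring through Theorem~\ref{th:reg2}: the quasi-periodic problem is uniquely solvable for every $\alpha\in(-k,k)$, so no residual discrete exceptional set arises. One small correction: your claim that $n_0$ is unique at $\alpha_0$ need not hold---two indices can satisfy $|\Lambda^*n-\alpha_0|=k$ simultaneously (cf.\ the remark following the theorem)---but the same splitting goes through with a rank-two perturbation.
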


\begin{proof}We will just show the main idea of the proof in \cite{Kirsc1993}.  
The proof is based on the variational form of the function $v(\alpha,x):=e^{-\i\alpha x_1}u(\alpha,x)$, i.e., for any $\phi
\in \widetilde{H}_0^1(\Omega^\Lambda_H)$,
\begin{equation*}
\widetilde{a}_\alpha(v(\alpha,\cdot),\phi)=-2\i\sqrt{k^2-\alpha^2}\int_{\Gamma^\Lambda_H}e^{-\i\sqrt{k^2-\alpha^2}x_2}\overline{\phi}\d s,
\end{equation*}
where 
\begin{equation*}
\widetilde{a}_\alpha(\psi,\phi):=\int_{\Omega^\Lambda_H}\left[\grad_\alpha \psi\cdot\overline{\grad_\alpha \phi}-k^2 v(\alpha,\cdot)\overline{\phi}\right]\d x-\int_{\Gamma_H^\Lambda}\widetilde{T}^+_\alpha \left[\psi\big|_{\Gamma^\Lambda_H}\right]\overline{\phi}\d s,
\end{equation*}
$\grad_\alpha=\grad+\i\alpha(1,0)^\top$, and the operator $\widetilde{T}^+_\alpha$ is the DtN map defined on periodic functions in $\Gamma^\Lambda_H$ by
\begin{equation*}
\widetilde{T}^+_\alpha(\psi)=\i\sum_{j\in\Z}\sqrt{k^2-|\Lambda^*j-\alpha|^2}\hat{\psi}(j)e^{\i\Lambda^* j x_1},\quad \psi=\sum_{j\in\Z}\hat{\psi}(j)e^{\i\Lambda^* j x_1}.
\end{equation*} 
Each term in the variational form depends analytically on $\alpha\in(-k,k)$ except for   the term $\int_{\Gamma_H^\Lambda}\widetilde{T}^+_\alpha \left[v(\alpha,\cdot)\Big|_{\Gamma^\Lambda_H}\right]\overline{\phi}\d s$. The operator $\widetilde{T}^+_\alpha$ depends analytically on $\alpha\in(-k,k)\setminus\S$, thus the solution $v(\alpha,\cdot)$ depends analytically in $\alpha\in(-k,k)\setminus\S$. For each $\alpha_0\in\S$, there is a neighbourhood of $U$ such that $\widetilde{T}^+_\alpha$ could be split into one operator depends analytically on $\alpha\in U$ and the one with a $\sqrt{k^2-|\Lambda^*n_0-\alpha|^2}$-singularity. Thus the singularity of $v(\alpha,\cdot)$ with respect to $\alpha\in U$ could be deduced from the Neumann series.
\end{proof}

\begin{remark}
For any $\alpha_0\in\S$, there is either 1) an $n_0\in\Z$ such that $\left|\Lambda^*n_0-\alpha_0\right|=k$ or 2) two $n_1,\,n_2\in\Z$ such that $\left|\Lambda^*n_1-\alpha_0\right|=\left|\Lambda^*n_2-\alpha_0\right|=k$.

 Take Case 1) for example, i.e., $\left|\Lambda^*n_0-\alpha_0\right|=k$, then either $\Lambda^*n_0-k=\alpha_0$ or $\Lambda^*n_0+k=\alpha_0$. If $\Lambda^*n_0-k=\alpha_0$, $\beta_{n_0}=\sqrt{\alpha-\alpha_0}\cdot\sqrt{2k+\alpha_0-\alpha}$ where the second term, which is independent of $n_0$, is analytic for $\alpha$ in a small enough neighborhood of $\alpha_0$. If $k+\Lambda^*n_0=\alpha_0$, 
$\beta_{n_0}=\i\sqrt{2k-\alpha_0+\alpha}\cdot\sqrt{\alpha-\alpha_0}$, where the first term, which is independent in $n_0$, is analytic for $\alpha$ in a small enough neighborhood of $\alpha_0$ as well. Then for $\alpha$ in a small neighborhood $U$ of $\alpha_0$, the total field $u$ in Theorem \ref{th:kirsch} has the form of 
\begin{equation}\label{eq:kirsch}
u=v+\sqrt{\alpha-\alpha_0}\,w
\end{equation}
where $v,\,w$ are both  functions in $C^\omega\left(U\cap(-k,k);\widetilde{H}^1(\Omega^\Lambda_H)\right)$.

 Similar conclusion could be obtained for the second case. Thus we get a simpler form \eqref{eq:kirsch} of the representation of the regularity form near the singularity $\alpha_0\in\S$. 
\end{remark}

The result in Theorem \ref{th:kirsch} can be extended to a wider family of incident fields with the form $\phi(\alpha,x)=e^{\i\alpha x_1-\i\sqrt{k^2-\alpha^2}x_2}$ for $\alpha\in\R$, where both the plane waves and the evanescent waves are included.

\begin{corollary}\label{th:regul_plain_wave}
If the incident field $\phi(\alpha,x)=e^{\i\alpha x_1-\i\sqrt{k^2-\alpha^2}x_2}$, where $\alpha\in\R$, then the total field $u(\alpha,\cdot)$  belongs to the space $C^0(\R;\widetilde{H}^1(\Omega^\Lambda_H))$ and for any interval $\I\subset \R\setminus\S$, $u\in C^\omega(\I;\widetilde{H}^1_\alpha(\Omega^\Lambda_H))$. Moreover, for any $\alpha_0\in\S$, there is a small neighbourhood $U$ of $\alpha_0$ and a pair of functions $v(\alpha,\cdot),\,w(\alpha,\cdot)\in C^\omega\left(U\cap\I; \widetilde{H}^1(\Omega^\Lambda_H)\right)$ such that 
\begin{equation*}
u(\alpha,\cdot)=v(\alpha,\cdot)+\sqrt{\alpha-\alpha_0}\,w(\alpha,\cdot).
\end{equation*}

\end{corollary}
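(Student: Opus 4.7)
The approach is to mimic the variational argument sketched for Theorem~\ref{th:kirsch}, only now allowing $\alpha$ to range over all of $\R$. For $\alpha\in(-k,k)$ the statement is Theorem~\ref{th:kirsch} itself; the real task is to cover $|\alpha|\geq k$, where $\phi(\alpha,x)$ is evanescent (exponentially growing or decaying in $x_2$) rather than a propagating plane wave. Because $\Omega^\Lambda_H$ is bounded in $x_2$, the incident field still lies in $H^1(\Omega^\Lambda_H)$ for every $\alpha\in\R$, so the variational framework and the unique solvability result (the non-perturbed specialisation of Theorem~\ref{th:reg2}) remain available.

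First I would pass to the periodic variable $v(\alpha,x):=e^{-\i\alpha x_1}u(\alpha,x)$ and derive exactly the variational problem used in the proof of Theorem~\ref{th:kirsch},
\begin{equation*}
\widetilde{a}_\alpha(v(\alpha,\cdot),\phi)=-2\i\sqrt{k^2-\alpha^2}\,e^{-\i\sqrt{k^2-\alpha^2}H}\int_{\Gamma^\Lambda_H}\overline{\phi}\d s,\qquad \phi\in\widetilde{H}^1_0(\Omega^\Lambda_H).
\end{equation*}
Inspection shows that the only non-analytic dependence on $\alpha$ enters through the square roots $\sqrt{k^2-|\Lambda^*j-\alpha|^2}$ in the symbol of $\widetilde{T}^+_\alpha$ and through the scalar factor $\sqrt{k^2-\alpha^2}$ on the right-hand side; every other ingredient is polynomial in $\alpha$ or carried by $e^{\pm\i\alpha x_1}$. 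On $\R\setminus\S$ all of these roots are analytic, so both $\widetilde{a}_\alpha$ and the right-hand side depend analytically on $\alpha$; combined with uniqueness (Theorem~\ref{th:reg2}) and analytic Fredholm theory, this yields analytic dependence of $v(\alpha,\cdot)$ on every interval $\I\subset\R\setminus\S$, which then transfers to $u=e^{\i\alpha x_1}v$.

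At a point $\alpha_0\in\S$, pick the index (or the two indices) $n_0\in\Z$ with $|\Lambda^*n_0-\alpha_0|=k$ and use the factorisation from the Remark after Theorem~\ref{th:kirsch}: $\sqrt{k^2-|\Lambda^*n_0-\alpha|^2}=\sqrt{\alpha-\alpha_0}\,g_{n_0}(\alpha)$ with $g_{n_0}$ analytic and non-vanishing on a neighbourhood $U$ of $\alpha_0$. All other square roots remain analytic on $U$, so the DtN operator splits as $\widetilde{T}^+_\alpha=T_a(\alpha)+\sqrt{\alpha-\alpha_0}\,T_b(\alpha)$ with analytic $T_a,T_b$, and the right-hand side splits as $F_1(\alpha,\cdot)+\sqrt{\alpha-\alpha_0}\,F_2(\alpha,\cdot)$ (when $\alpha_0=\pm k$ the case $n_0=0$ makes $F_2$ non-trivial; otherwise $F_2\equiv 0$). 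Making the ansatz $v(\alpha,\cdot)=v_1(\alpha,\cdot)+\sqrt{\alpha-\alpha_0}\,v_2(\alpha,\cdot)$ with $v_1,v_2\in C^\omega(U;\widetilde{H}^1_0(\Omega^\Lambda_H))$, substituting into the variational equation and separating analytic from $\sqrt{\alpha-\alpha_0}$-analytic contributions produces a $2\times 2$ block system of analytic operator equations for $(v_1,v_2)$. Setting $v:=e^{\i\alpha x_1}v_1$ and $w:=e^{\i\alpha x_1}v_2$ then yields the required representation, and continuity at $\alpha_0$ is immediate from the boundedness of the analytic pieces together with $\sqrt{\alpha-\alpha_0}\to 0$.

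The hard part is showing that the block operator on $(v_1,v_2)$ is Fredholm with trivial kernel at $\alpha=\alpha_0$ itself, so that the two halves of the solution are uniquely determined and inherit analyticity from the coefficients. The cleanest route is to first construct, on $U\cap(\R\setminus\S)$, the unique analytic solution from the preceding paragraph, check that it admits the ansatz decomposition by reading it off the explicit splitting of $\widetilde{T}^+_\alpha$, and then extend the two pieces analytically through $\alpha_0$ via a removable-singularity argument based on the a priori $H^1$-bound from Theorem~\ref{th:reg2}. A secondary technicality is the case $\alpha_0=\pm k$, where operator and data share a $\sqrt{\alpha-\alpha_0}$ singularity; here one must verify that the two singular contributions combine coherently, so that no spurious $1/\sqrt{\alpha-\alpha_0}$ term is introduced into the solution.
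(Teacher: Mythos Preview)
Your approach is correct but differs from the paper's in how the delicate case $\alpha_0=\pm k$ is handled. Both you and the paper recognise that the extension to $|\alpha|>k$ away from $\pm k$ is routine (the same operator-theoretic argument as in Theorem~\ref{th:kirsch} applies verbatim), and that only neighbourhoods of $\pm k$ require care because the incident field itself ceases to be analytic there. You deal with this by keeping the incident field intact and instead splitting both the DtN operator and the right-hand side of the variational equation as analytic plus $\sqrt{\alpha-\alpha_0}$ times analytic, then solving the resulting $2\times 2$ block system for $(v_1,v_2)$ --- essentially running the Neumann-series argument of Kirsch with a singular right-hand side as well. The paper instead pre-processes the incident field: writing $e^{-\i\sqrt{k^2-\alpha^2}x_2}=\cosh(-\i\sqrt{k^2-\alpha^2}x_2)+\sinh(-\i\sqrt{k^2-\alpha^2}x_2)$ and using that $\cosh$ and $\widetilde{\mathrm{sinc}}$ are even in their argument, it decomposes $\phi=\phi_1+\sqrt{\alpha-k}\,\phi_2$ with $\phi_1,\phi_2$ genuinely analytic in $\alpha$ near $k$; it then applies Theorem~\ref{th:kirsch} as a black box to each $\phi_j$ and recombines. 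The paper's route is more modular and avoids re-opening the operator analysis, while yours is more uniform (every $\alpha_0\in\S$ is treated by the same mechanism) and makes the structure of the singularity more transparent. Your ``hard part'' is in fact not hard: at $\alpha=\alpha_0$ the $2\times2$ system is lower triangular with the invertible operator $\widetilde a_{\alpha_0}$ on the diagonal, so unique solvability and analytic dependence follow directly without the removable-singularity detour you sketch.
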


\begin{proof}
It is easy to extend the result of Theorem \ref{th:kirsch} to the case that $\alpha\in (-\infty,-k)\cup(k,+\infty)$. The only case that may make problem is when $\alpha$ in the neighbourhood of $\pm k$, for the plain wave is no longer analytic in $(-k-\delta,-k+\delta)$ or $(k-\delta,k+\delta)$, where $\delta>0$ is a small enough positive number. 

Let $\alpha\in (k-\delta,k+\delta)$, and $\delta$ is small enough such that $\S\cap(k-\delta,k+\delta)=\{k\}$ then the incident wave
\begin{equation*}
\phi(\alpha,x)=\exp(\i\alpha x_1-\i\sqrt{k^2-\alpha^2}x_2).
\end{equation*}

As $e^{-\i\alpha x_1}\phi(\alpha,x)=e^{-\i\sqrt{k^2-\alpha^2}x_2}$ has the form
\begin{equation*}
e^{-\i\alpha x_1}\phi(\alpha,x)=\cosh(-\i\sqrt{k^2-\alpha^2}x_2)+\sinh(-\i\sqrt{k^2-\alpha^2}x_2).
\end{equation*}
Define the functions
\begin{eqnarray*}
&&\phi_1(\alpha, x)=e^{\i\alpha x_1}\cosh(-\i\sqrt{k^2-\alpha^2}x_2),\\
&&\phi_2(\alpha, x)=- e^{\i\alpha x_2}\widetilde{\rm{sinc}}(-\i\sqrt{k^2-\alpha^2}x_2)\sqrt{\alpha+k}\, x_2,
\end{eqnarray*}
where $\widetilde{\rm{sinc}}$ is a function defined in $\C$ by
\begin{equation*}
\widetilde{\rm{sinc}}(z)=\begin{cases}
\frac{\sinh(z)}{z}\quad & z\neq 0\\
1\quad & z=0
\end{cases}.
\end{equation*}
As $\cosh$ and $\widetilde{\rm{sinc}}$ have Taylor's series at $z=0$
\begin{equation*}
\cosh(z)=\sum_{n=0}^\infty \frac{z^{2n}}{(2n)!},\quad
\widetilde{\rm{sinc}}(z)=\sum_{n=0}^\infty\frac{z^{2n}}{(2n+1)!},
\end{equation*}
the functions $\cosh(-\i\sqrt{k^2-\alpha^2}x_2)$  and $\widetilde{\rm{sinc}}(-\i\sqrt{k^2-\alpha^2}x_2)$ has the Taylor's expansion
\begin{eqnarray*}
&&\cosh(-\i\sqrt{k^2-\alpha^2}x_2)=\sum_{n=0}^\infty \frac{(\alpha^2-k^2)^n x_2^{2n}}{(2n)!}\\
&&\widetilde{\rm{sinc}}(-\i\sqrt{k^2-\alpha^2}x_2)=\sum_{n=0}^\infty \frac{(\alpha^2-k^2)^n x_2^{2n}}{(2n+1)!},
\end{eqnarray*}
thus they are both analytic functions in $\alpha$ in a small neighbourhood of $k$. Then the incident field is written into the form of 
\begin{equation*}
\phi(\alpha,x)=\phi_1(\alpha,x)+\sqrt{\alpha-k}\, \phi_2(\alpha,x).
\end{equation*}
From the proof of Theorem \ref{th:kirsch} in \cite{Kirsc1993}, the total field with the incident field $\phi_j\in C^\omega((k-\delta,k+\delta);H^1(\Omega^\Lambda_H))$, $j=1,2$, $u_j$ has the decomposition that
\begin{equation*}
u_j=v_j+\sqrt{\alpha-k}\, w_j
\end{equation*}
where $v_j,\,w_j\in C^\omega\left((k-\delta,k+\delta);\widetilde{H}^1_\alpha(\Omega^\Lambda_H)\right)$ are all analytic functions in $\alpha$. Then the total field $u$ satisfies
\begin{equation*}
\begin{aligned}
u&=u_1+u_2\\
&=v_1+\sqrt{\alpha-k}\, w_1+\sqrt{\alpha-k}\, \left[v_2+\sqrt{\alpha-k}\, w_2\right]\\
&=\left[v_1+(\alpha-k)w_2\right]+\sqrt{\alpha-k}\, \left[w_1+v_2\right].
\end{aligned}
\end{equation*}
The terms $v_1+(\alpha-k)w_2$ and $w_1+v_2$ are both in the space $C^\omega\left((k-\delta,k+\delta);\widetilde{H}^1(\Omega^\Lambda_H)\right)$, let $v=v_1+(\alpha-k)w_2$ and $w=w_1+v_2$, then
\begin{equation*}
u=v+\sqrt{\alpha-k}\, w,
\end{equation*}
the case that $\alpha_0=k$ is proved. The case that $\alpha_0=-k$ could be proved similarly. The proof is finished. 
\end{proof}

In the rest of this section, we will consider the scattering problems with non-perturbed periodic surfaces. Let $u\in\widetilde{H}_r^1(\Omega^H)$ be the total field with the incident $u^i\in H_r^1(\Omega^H)$ and $w=\J_\Omega u\in H_0^r(\Wast;\widetilde{H}^1_\alpha(\Omega^\Lambda_H))$. As $w$ is $\Lambda^*$-periodic in $\alpha$, we only consider the regularity in one periodic cell $\Wast$. For any $\alpha_0\in\S$, $\alpha_0+\Lambda^*\in\S$, thus $\S$ distributes periodically in $\R$. Let $\alpha_1\in\S$ be a fixed number, there are finite number of elements of $\S$ that lies in $[\alpha_1,\alpha_1+\Lambda^*]$. Let $S$ be the number, then $\alpha_1<\alpha_2<\cdots<\alpha_S=\alpha_1+\Lambda^*$ where $[\alpha_1,\alpha_S]\cap\S=\{\alpha_1,\dots,\alpha_S\}$. 

We will define the space of the functions defined in $\I\times W$ that satisfy the property of $u(\alpha,\cdot)$ in  Corollary \ref{th:regul_plain_wave} as follows. 
\begin{equation*}
\begin{aligned}
&\A^\omega(\I;S(W);\S):=\left\{u\in C^0(\I;S(W)):\,\text{ for any  subinterval $\I_0\subset\I\setminus\S$, }\right.\\&u\in C^\omega(\I_0;S(W));\,\text{$\forall\alpha_j\in\I\cap\S$, there is a neighbourhood $U$ of $\alpha_j$}\\&\left.\text{and a pair $v,w\in C^\omega(U\cap\I,S(W))$ such that $u=v+\sqrt{\alpha-\alpha_j}\, w$} \right\}.
\end{aligned}
\end{equation*}

\begin{assumption}\label{asp:incident}
We assume that the incident field $u^i\in H_r^1(\Omega_H)$ satisfies that the Bloch transform  $(\J_\Omega u^i)(\alpha,\cdot)\in \A^\omega\left(\Wast;H^1(\Omega^\Lambda_H);\S\right)$.
\end{assumption}

\begin{remark} Assumption \ref{asp:incident} is satisfied by a large number of incident waves.  In fact, the Green's functions in a half-space satisfies Assumption \ref{asp:incident}. We will take this for example. In the following examples, we set $\Lambda=2\pi$ thus $\Lambda^*=1$.

 The half-space Green's function has the form of
\begin{equation*}
u^i(x,y)=\frac{\i}{4}\left[H_0^{(1)}(k|x-y|)-H_0^{(1)}(k|x-y'|)\right]
\end{equation*}
where $y=(y_1,y_2)^T$ lies above $\Gamma_H$. Bloch transform of the Green's function has the representation of
\begin{equation*}
(\J u^i)(\alpha,x)=\frac{1}{2\pi}\sum_{j\in\Z}e^{\i\alpha_j (x_2-y_2)+\i\beta_j y_2}\rm{sinc}(\beta_j x_2)x_2.
\end{equation*}
As the number of elements in $S_N$ is finite, suppose the number to be $M$, then for each $j=1,\dots,M$, define
\begin{eqnarray*}
&&u_j(\alpha,x)=\frac{1}{2\pi}\sum_{j\in\Z}e^{\i\alpha_j (x_2-y_2)}e^{\i\beta_j y_2}\rm{sinc}(\beta_j x_2)x_2,\\
&&u^1_j(\alpha,x)=\frac{\i}{2\pi}\sum_{j\in\Z}e^{\i\alpha_j (x_2-y_2)}\rm{sinc}(\beta_j y_2)y_2\rm{sinc}(\beta_j x_2)x_2,\\
&&u^2_j(\alpha,x)=\frac{1}{2\pi}\sum_{j\in\Z}e^{\i\alpha_j (x_2-y_2)}\cos(\beta_j y_2)\rm{sinc}(\beta_j x_2)x_2.
\end{eqnarray*}
then $u_j=\beta_j u_j^1+u_j^2$. As $\rm{sinc}$ and $\cos$ has the Taylor's expansions that
\begin{equation*}
\rm{sinc}(x)=\sum_{n=0}^\infty\frac{(-1)^n x^{2n}}{(2n+1)!},\quad
\cos(x)=\sum_{n=0}^\infty \frac{(-1)^n x^{2n}}{(2n)!},
\end{equation*}
$u_j^1$, $u_j^2$ are analytic functions in both $\alpha$ and $x$. 
Note that $(\J_\Omega u^i)(\alpha,x)-\sum_{j=1}^M u_j(\alpha,x)$ is the convergent sum of analytic functions, $\J_\Omega u^i\in \A^\omega\left(\Wast;H^1(\Omega^\Lambda_H);\S\right)$.

\end{remark}

With Assumption \ref{asp:incident}, the regularity result for the Bloch transform of the total field comes directly from Corollary \ref{th:regul_bloch}. 

\begin{theorem}\label{th:regul_bloch}
Let $r>1/2$. Suppose the incident filed $u^i\in H_r^1(\Omega_H)$ satisfies Assumption \ref{asp:incident}, with a total field $u\in \widetilde{H}_r^1(\Omega_H)$. Then the Bloch transform of the total field $w(\alpha,x)=\left(\J_\Omega u\right)(\alpha,x)\in \A^\omega\left(\Wast;\widetilde{H}^1(\Omega^\Lambda_H);\S\right)$. 
\end{theorem}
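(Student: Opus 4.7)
The plan is to adapt the proof of Theorem \ref{th:kirsch} from plane-wave incidence to the class of incident fields covered by Assumption \ref{asp:incident}. Since the surface is non-perturbed, the bilinear form $b$ in \eqref{eq:var_bloch} vanishes, and because $r > 1/2$ Theorem \ref{th:continuous} reduces the problem to the pointwise identity
\[
a_\alpha(w(\alpha, \cdot), \phi_\alpha) = \int_{\Gamma^\Lambda_H} f(\alpha, \cdot) \overline{\phi_\alpha} \d s \quad \text{for every } \alpha \in \Wast \text{ and } \phi_\alpha \in \widetilde{H}^1_\alpha(\Omega^\Lambda_H).
\]
Following Kirsch, I substitute $v(\alpha, x) = e^{-\i \alpha x_1} w(\alpha, x)$ and $v^i(\alpha, x) = e^{-\i \alpha x_1}(\J_\Omega u^i)(\alpha, x)$, so that $v$ and $v^i$ are $\Lambda$-periodic in $x_1$. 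The variational equation then takes the form $\widetilde{a}_\alpha(v(\alpha, \cdot), \phi) = F(\alpha; \phi)$ for periodic test functions $\phi \in \widetilde{H}^1_0(\Omega^\Lambda_H)$, with $\widetilde{T}^+_\alpha$ replacing $T^+_\alpha$ and $F$ the Neumann-type functional built from $v^i$. Because multiplication by $e^{-\i \alpha x_1}$ is entire in $\alpha$, Assumption \ref{asp:incident} transfers to $v^i \in \A^\omega(\Wast; H^1(\Omega^\Lambda_H); \S)$, and $F$ inherits the same $\A^\omega$-regularity with values in the dual of $\widetilde{H}^1_0(\Omega^\Lambda_H)$.

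For the global continuity statement $w \in C^0(\Wast; \widetilde{H}^1(\Omega^\Lambda_H))$, I would combine the continuity of $\widetilde{T}^+_\alpha$ and of $F$ in $\alpha$ (which persists through $\S$ by Assumption \ref{asp:incident}) with the unique solvability of the quasi-periodic scattering problem. Away from $\S$, on any subinterval $\I_0 \subset \Wast \setminus \S$ each symbol $\sqrt{k^2 - |\Lambda^* j - \alpha|^2}$ is analytic in $\alpha$ and bounded away from $0$, so $\widetilde{T}^+_\alpha$ is an analytic operator-valued function of $\alpha$; together with the analytic dependence of $F$ on $\alpha$, analytic Fredholm theory yields $v \in C^\omega(\I_0; \widetilde{H}^1_0(\Omega^\Lambda_H))$ and hence $w \in C^\omega(\I_0; \widetilde{H}^1(\Omega^\Lambda_H))$.

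The delicate case is a neighbourhood $U$ of a given $\alpha_0 \in \S$. As in the remark after Theorem \ref{th:kirsch}, the finite (at most two) exceptional indices $n_0$ with $|\Lambda^* n_0 - \alpha_0| = k$ give $\beta_{n_0}(\alpha) = \sqrt{\alpha - \alpha_0}\, g_{n_0}(\alpha)$ with $g_{n_0}$ analytic on $U$. Isolating those finitely many Fourier modes, one writes $\widetilde{T}^+_\alpha = T_A(\alpha) + \sqrt{\alpha - \alpha_0}\, T_S(\alpha)$ with $T_A, T_S$ analytic operator-valued on $U$, and Assumption \ref{asp:incident} supplies a matching decomposition $F(\alpha; \cdot) = F_1(\alpha; \cdot) + \sqrt{\alpha - \alpha_0}\, F_2(\alpha; \cdot)$ with $F_1, F_2$ analytic in $\alpha$. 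Introducing the uniformizer $\tau = \sqrt{\alpha - \alpha_0}$, the entire equation becomes analytic in $\tau$ near $\tau = 0$, the principal operator at $\tau = 0$ remaining invertible by exactly the Neumann-series argument from Kirsch's proof (the extracted modes are precisely those responsible for the degeneracy). Thus $v$ is analytic in $\tau$, and splitting its Taylor series into even and odd parts in $\tau$ yields $v = v_1(\alpha) + \sqrt{\alpha - \alpha_0}\, v_2(\alpha)$ with $v_1, v_2 \in C^\omega(U; \widetilde{H}^1_0(\Omega^\Lambda_H))$; multiplying back by $e^{\i \alpha x_1}$ gives the same decomposition for $w$, establishing $w \in \A^\omega(\Wast; \widetilde{H}^1(\Omega^\Lambda_H); \S)$.

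The step I expect to be the main obstacle is verifying that the interaction of the $\sqrt{\alpha - \alpha_0}$ singularity carried by $\widetilde{T}^+_\alpha$ with the $\sqrt{\alpha - \alpha_0}$ singularity carried by the incident data does not produce higher-order branch points in $w$. The reparameterization $\tau = \sqrt{\alpha - \alpha_0}$ is what resolves this: once the equation is rewritten in $\tau$ it is an ordinary analytic operator equation, and all cross terms of the form $(\sqrt{\alpha - \alpha_0})^2 = \alpha - \alpha_0$ are absorbed into the analytic part $v_1(\alpha)$, leaving intact the two-term structure required for membership in $\A^\omega(\Wast; \widetilde{H}^1(\Omega^\Lambda_H); \S)$.
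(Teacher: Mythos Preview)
Your proposal is correct and follows the same overall architecture as the paper: invoke Theorem~\ref{th:continuous} to decouple the problem into one quasi-periodic equation per $\alpha$, pass to periodic unknowns via multiplication by $e^{\pm\i\alpha x_1}$, and then analyse the $\alpha$-dependence through the operator $\widetilde{T}^+_\alpha$. Where you differ is in the treatment of a neighbourhood of $\alpha_0\in\S$. The paper (through the proof of Corollary~\ref{th:regul_plain_wave}) first splits the incident data as $F = F_1 + \sqrt{\alpha-\alpha_0}\,F_2$, applies Kirsch's result separately to the two analytic right-hand sides to obtain $u_j = v_j + \sqrt{\alpha-\alpha_0}\,w_j$, and then recombines, absorbing the cross term $(\alpha-\alpha_0)w_2$ into the analytic part. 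You instead introduce the uniformiser $\tau=\sqrt{\alpha-\alpha_0}$ once, obtain an equation analytic in $\tau$, and split the $\tau$-Taylor series into even and odd parts. The two routes are equivalent; your uniformiser argument has the mild advantage that it handles the square-root coming from $\widetilde{T}^+_\alpha$ and the one coming from the data simultaneously, so the ``no higher branch points'' concern you flag is resolved automatically rather than by an explicit recombination step.
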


\begin{proof}
When $r>1/2$, from Theorem \ref{th:continuous}, the variational form is equivalent to the decoupled system 
\begin{equation*}
a_\alpha(w(\alpha,\cdot),\phi_\alpha)=\int_{\Gamma^\Lambda_H}f(\alpha,\cdot)\overline{\phi_\alpha}\d s.
\end{equation*}
Then the periodic function $v(\alpha,x):=e^{\i\alpha x_1}w(\alpha,x)$ satisfies the variational form for any periodic function $\phi\in\widetilde{H}^1_0(\Omega^\Lambda_H)$
\begin{equation*}
\widetilde{a}_\alpha(v(\alpha,\cdot),\phi)=\int_{\Gamma^\Lambda_H}e^{\i\alpha x_1}f(\alpha,\cdot)\overline{\phi}\d s.
\end{equation*}
Thus the conclusion could be obtained from the same argument of the proof of Corollary \ref{th:regul_plain_wave}.
\end{proof}

With the regularity results for non-perturbed periodic surfaces, we will discuss the locally perturbed cases in the next section.

\section{Scattering from locally perturbed periodic surfaces}

In this section, we still hold the assumption that $r>1/2$, then from Theorem \ref{th:continuous}, the variational form is equivalent to 
\begin{equation*}
a_\alpha(w(\alpha,\cdot),\phi_\alpha)+b(u_T,\phi_\alpha)=\int_{\Gamma^\Lambda_H}f(\alpha,\cdot)\overline{\phi_\alpha}\d s.
\end{equation*}
Let $v(\alpha,x)=e^{\i\alpha x_1}w(\alpha,\cdot)$, $\phi=e^{\i\alpha x_1}\phi_\alpha(x)$, then $v$ is the solution of the following system
\begin{equation}\label{eq:var_equi_pert}
\widetilde{a}_\alpha(v(\alpha,\cdot),\phi)=\int_{\Gamma^\Lambda_H}e^{\i\alpha x_1}f(\alpha,\cdot)\overline{\phi}\d s-\widetilde{b}_\alpha(u_T,\phi),
\end{equation}
where the sesquilinear form $\widetilde{b}_\alpha$ is defined by
\begin{equation*}
\widetilde{b}_\alpha(\xi,\psi)=\left[\frac{\Lambda}{2\pi}\right]^{1/2}\int_{\Omega^\Lambda_H}e^{\i\alpha x_1}\Big[(A_p-I)\grad\xi\cdot\overline{\grad_\alpha\psi}-k^2(c_p-1)\xi\overline{\psi}\Big]\d x,
\end{equation*}
which depends analytically on $\alpha$, thus the system \eqref{eq:var_equi_pert} has a right hand side that depends analytically on $\alpha$. So from similar arguments in Theorem \ref{th:regul_bloch}.

\begin{theorem}\label{th:singularity}
Suppose $u^i\in H_r^1(\Omega_H^p)$ for some $r>1/2$ satisfies Assumption \ref{asp:incident}, then the Bloch transform  $w=\J_\Omega u_T\in H_0^r(\Wast;\widetilde{H}^1(\Omega^\Lambda_H))$ belongs to the space $\A^\omega\left(\Wast;\widetilde{H}^1(\Omega^\Lambda_H);\S\right)$.
\end{theorem}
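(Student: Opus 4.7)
The strategy is to treat \eqref{eq:var_equi_pert} as a quasi-periodic problem on the unperturbed cell, whose left-hand side is the same sesquilinear form $\widetilde{a}_\alpha$ already appearing in Theorem \ref{th:regul_bloch}, driven by a right-hand side that contains both the original Bloch-transformed incident load and the extra term $-\widetilde{b}_\alpha(u_T,\cdot)$ coming from the local perturbation. The essential point is that $u_T\in\widetilde{H}^1_r(\Omega_H)$ is uniquely determined by \eqref{eq:var_trans} and is a single fixed function, \emph{independent of $\alpha$}; hence the perturbation does not couple different values of $\alpha$ and can be absorbed into the source. Once the full source is shown to belong to $\A^\omega(\Wast;(\widetilde{H}^1_0(\Omega^\Lambda_H))^\ast;\S)$, the conclusion will follow by exactly the argument of Theorem \ref{th:regul_bloch}.

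First I would verify this membership. The load term $\phi\mapsto\int_{\Gamma^\Lambda_H}e^{\i\alpha x_1}f(\alpha,\cdot)\overline{\phi}\d s$ was already shown to lie in $\A^\omega(\Wast;(\widetilde{H}^1_0(\Omega^\Lambda_H))^\ast;\S)$ during the proof of Theorem \ref{th:regul_bloch}, combining Assumption \ref{asp:incident} with the analytic plus $\sqrt{\alpha-\alpha_j}$ splitting of $\widetilde{T}^+_\alpha$ near each $\alpha_j\in\S$. For the new contribution $\widetilde{b}_\alpha(u_T,\cdot)$, inspection of its definition shows that the $\alpha$-dependence enters only through the entire function $e^{\i\alpha x_1}$ and through $\grad_\alpha=\grad+\i\alpha(1,0)^\top$ acting on the test function; hence this term lies in $C^\omega(\R;(\widetilde{H}^1_0(\Omega^\Lambda_H))^\ast)$, which embeds into $\A^\omega$ trivially (take the square-root coefficient to be zero). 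Since $\A^\omega$ is closed under addition, the whole right-hand side of \eqref{eq:var_equi_pert} belongs to $\A^\omega(\Wast;(\widetilde{H}^1_0(\Omega^\Lambda_H))^\ast;\S)$.

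The final step is to transfer the regularity of the source to the solution $v(\alpha,\cdot)\in\widetilde{H}^1_0(\Omega^\Lambda_H)$, and then to $w=e^{-\i\alpha x_1}v$. On any closed subinterval of $\Wast\setminus\S$, the operator $\widetilde{a}_\alpha$ depends analytically on $\alpha$ and is boundedly invertible, as established in \cite{Kirsc1993}, so $\alpha\mapsto v(\alpha,\cdot)$ is analytic with values in $\widetilde{H}^1_0(\Omega^\Lambda_H)$. Near each $\alpha_j\in\S\cap\Wast$, I would split $\widetilde{T}^+_\alpha=T^{\mathrm{an}}_\alpha+\sqrt{\alpha-\alpha_j}\,T^{\mathrm{sg}}_\alpha$ with both factors analytic in $\alpha$, together with the analogous decomposition of the source, and solve the resulting perturbed equation by a Neumann series in the small parameter $\sqrt{\alpha-\alpha_j}$; this produces $v=v_1+\sqrt{\alpha-\alpha_j}\,v_2$ with $v_1,v_2\in C^\omega$ in a neighbourhood of $\alpha_j$, and multiplying by the analytic factor $e^{-\i\alpha x_1}$ transfers the same decomposition to $w$. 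The main obstacle will be ensuring that the Neumann series converges uniformly in a full neighbourhood of $\alpha_j$, i.e.\ that $T^{\mathrm{an}}_\alpha$ is uniformly invertible and $T^{\mathrm{sg}}_\alpha$ uniformly bounded there. This is exactly the estimate that Kirsch established in \cite{Kirsc1993} and that was invoked already in Theorem \ref{th:regul_bloch}; since the local perturbation modifies only the right-hand side and leaves $\widetilde{a}_\alpha$ untouched, it introduces no genuinely new difficulty, and the proof reduces to careful bookkeeping of the extra analytic source $-\widetilde{b}_\alpha(u_T,\cdot)$.
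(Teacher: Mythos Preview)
Your proposal is correct and follows essentially the same approach as the paper. The paper's own argument is very terse: it observes that $\widetilde{b}_\alpha(u_T,\cdot)$ depends analytically on $\alpha$ (since $u_T$ is fixed and the only $\alpha$-dependence is through $e^{\i\alpha x_1}$ and $\grad_\alpha$), so the right-hand side of \eqref{eq:var_equi_pert} has the required structure, and then invokes the argument of Theorem \ref{th:regul_bloch}; you have simply made these steps explicit, including the key observation that $u_T$ is a single fixed function independent of $\alpha$.
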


A direct corollary of Theorem \ref{th:singularity} is described as follows.
\begin{corollary}\label{th:singularity_smooth}
Suppose $\alpha_1,\dots,\alpha_s\in\S\cap\overline{\Wast}$, there are $S+1$ functions defined in $\Wast\times\Omega^\Lambda_H$, i.e.,  $w_1(\alpha,\cdot),\dots,w_S(\alpha,\cdot),w_{S+1}(\alpha,\cdot)$ that belong to the space $C^\infty\left(\Wast;\widetilde{H}^1(\Omega^\Lambda_H)\right)$ such that
\begin{equation}
w(\alpha,\cdot)=w_{S+1}(\alpha,\cdot)+\sum_{n=1}^S\sqrt{\alpha-\alpha_n}\,w_n(\alpha,\cdot).
\end{equation}
\end{corollary}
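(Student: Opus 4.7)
The plan is to promote the local singular decomposition provided by Theorem \ref{th:singularity} to a single global representation by means of a smooth partition of unity. By that theorem $w \in \A^\omega\left(\Wast; \widetilde{H}^1(\Omega^\Lambda_H); \S\right)$, which means that $\alpha \mapsto w(\alpha, \cdot)$ is analytic on $\Wast \setminus \S$, and that for each $\alpha_n \in \S \cap \overline{\Wast}$ there exist an open neighborhood $U_n \subset \Wast$ and analytic functions $v_n,\, \tilde{w}_n \in C^\omega(U_n; \widetilde{H}^1(\Omega^\Lambda_H))$ with
\begin{equation*}
w(\alpha,\cdot) = v_n(\alpha,\cdot) + \sqrt{\alpha - \alpha_n}\, \tilde{w}_n(\alpha,\cdot) \qquad \text{for all } \alpha \in U_n.
\end{equation*}
Since only finitely many $\alpha_n$ lie in the compact $\overline{\Wast}$, I may shrink the $U_n$ so that they are pairwise disjoint.

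Next, I would choose cutoffs $\chi_n \in C^\infty_c(U_n)$ with $\chi_n \equiv 1$ on a smaller open neighborhood $V_n \subset U_n$ of $\alpha_n$, set
\begin{equation*}
w_n(\alpha,\cdot) := \chi_n(\alpha)\, \tilde{w}_n(\alpha,\cdot) \qquad (\text{extended by zero outside } U_n),
\end{equation*}
and define the remainder
\begin{equation*}
w_{S+1}(\alpha,\cdot) := w(\alpha,\cdot) - \sum_{n=1}^S \sqrt{\alpha-\alpha_n}\, w_n(\alpha,\cdot).
\end{equation*}
The identity asserted in the corollary then holds by construction, and each $w_n$ belongs to $C^\infty(\Wast; \widetilde{H}^1(\Omega^\Lambda_H))$ because $\chi_n$ is compactly supported smooth and $\tilde{w}_n$ is analytic on $U_n$.

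The substantive step is verifying $w_{S+1} \in C^\infty(\Wast; \widetilde{H}^1(\Omega^\Lambda_H))$. I would do this region by region. On each $V_m$ we have $\chi_m \equiv 1$ and $\chi_n \equiv 0$ for $n \neq m$, so substituting the local expansion gives $w_{S+1} = v_m$, which is analytic. On each annulus $U_m \setminus V_m$ the local expansion still applies and a direct substitution yields $w_{S+1} = v_m + (1-\chi_m)\sqrt{\alpha-\alpha_m}\, \tilde{w}_m$; since $\alpha \neq \alpha_m$ throughout this region the factor $\sqrt{\alpha-\alpha_m}$ is smooth and the whole expression is $C^\infty$. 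Finally, on $\Wast \setminus \bigcup_n U_n$ all cutoffs vanish and $w_{S+1} = w$ is analytic by Theorem \ref{th:singularity}. Uniform boundedness of every $\alpha$-derivative in the $\widetilde{H}^1(\Omega^\Lambda_H)$-norm, which is required by the paper's definition of $C^\infty(\Wast; \widetilde{H}^1(\Omega^\Lambda_H))$, then follows from continuity of those derivatives on the compact closure $\overline{\Wast}$.

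The main obstacle I anticipate is the bookkeeping at the boundary of the half-open cell $\Wast = (-\Lambda^*/2, \Lambda^*/2]$. Since $\S$ is $\Lambda^*$-periodic and the listing $\alpha_1 < \cdots < \alpha_S = \alpha_1 + \Lambda^*$ identifies endpoints, any singularity sitting on $\partial \Wast$ must be treated by a single cutoff straddling the periodic seam. I would handle this by extending $w$ and the $\chi_n$ periodically to $\R$, carrying out the partition-of-unity argument there, and then restricting back to one fundamental cell. A second minor point is to select the branch of $\sqrt{\cdot}$ consistently with the convention of Corollary \ref{th:regul_plain_wave} (non-negative real and imaginary parts), so that the cancellation in region (i) is exact.
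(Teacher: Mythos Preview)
The paper states this result as a ``direct corollary'' of Theorem \ref{th:singularity} and provides no proof. Your partition-of-unity construction is the standard and correct way to upgrade the local decompositions guaranteed by membership in $\A^\omega$ to a single global one: multiplying each local singular part $\tilde w_n$ by a smooth cutoff $\chi_n$ supported in $U_n$ and throwing the rest into the remainder $w_{S+1}$ is exactly the right maneuver, and your case analysis showing that $w_{S+1}$ equals, near every point of $\Wast$, a manifestly smooth expression (either $v_m$, or $v_m+(1-\chi_m)\sqrt{\alpha-\alpha_m}\,\tilde w_m$ with $\alpha\neq\alpha_m$, or $w$ itself) is sound. Your periodic-extension remark correctly handles any singularity sitting on the seam of the half-open cell, and the uniform-boundedness requirement in the paper's definition of $C^\infty\left(\Wast;\widetilde{H}^1(\Omega^\Lambda_H)\right)$ is indeed met because all the $\alpha$-derivatives so produced are continuous on the compact quotient $\R/\Lambda^*\Z$.
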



\section{Further regularity results of scattering problems}

In this section, we will consider a more generalized class of incident fields. In the following part, we only want to show the regularity results without proving them, for the proofs are quite similar.

Although the Green's function $u^i(x,y)$ satisfies Assumption \ref{asp:incident}, there is a large class of incident fields that does not satisfy this assumption. We will firstly study the regularity property of the Bloch transform of the Herglotz wave function. Some of the results are from \cite{Lechl2016}. Define the weighted Hilbert space $L^2_{\cos}(-\pi/2,\pi/2)$ as the closure of $C_0^\infty(-\pi/2,\pi/2)$ in the norm
\begin{equation*}
\left\|\phi\right\|_{L^2_{\cos}(-\pi/2,\pi)}=\left[\int_{-\pi/2}^{\pi/2}\left|\phi(\theta)\right|^2/\cos\theta\d\theta\right]^{1/2}.
\end{equation*}
 The downward propagating Herglotz wave function with the density in $L^2_{\cos}(-\pi/2,\pi/2)$ is defined by
\begin{equation*}
(Hg)(x)=\int_{\S_-}e^{\i k x\cdot d}g(d)\d s(d)=\int_{\pi/2}^{\pi/2}e^{\i k\left(\sin\theta x_1-\cos\theta x_2\right)}\phi(\theta)\d\theta,
\end{equation*}
where $\phi\in L^2_{\cos}(-\pi/2,\pi/2)$, $\S_-$ is the lower half of one unit circle in $\R^2$. From \cite{Lechl2015e}, the Bloch transform of the Herglotz wave functions has the representation of 
\begin{equation*}
\left(\J_\Omega Hg\right)(\alpha,x)=\sqrt{\Lambda^*}\sum_{|\Lambda^*j-\alpha|<k}e^{\i(\Lambda^*j-\alpha)x_1-\i\sqrt{k^2-|\Lambda^*j-\alpha|^2}x_2}\frac{\phi(\arcsin(\Lambda^*j-\alpha)/k)}{\sqrt{k^2-|\Lambda^*j-\alpha|^2}}
\end{equation*}
For some choice of the function $\phi$, i.e., if 
$\phi(t)=h\left[\cos(t)\right]\cos(t)$, where $h$ is an analytic function defined in $[0,1]$ with $h(0)=0$, then
\begin{equation*}
\left(\J_\Omega Hg\right)(\alpha,x)=\frac{\sqrt{\Lambda^*}}{k}\sum_{|\Lambda^*j-\alpha|<k}e^{\i(\Lambda^*j-\alpha)x_1-\i\sqrt{k^2-|\Lambda^*j-\alpha|^2}x_2}h\left[\frac{\sqrt{k^2-|\Lambda^*j-\alpha|^2}}{k}\right].
\end{equation*}
By differentiating $\J_\Omega Hg$ with respect to $\alpha$,
\begin{equation*}
\begin{aligned}
&\frac{\partial}{\partial\alpha}\left(\J_\Omega Hg\right)(\alpha,x)\\
=&-\frac{\i\sqrt{\Lambda^*}}{k} x_1\sum_{|\Lambda^*j-\alpha|<k}e^{\i(\Lambda^*j-\alpha)x_1-\i\sqrt{k^2-|\Lambda^*j-\alpha|^2}x_2} h\left[\frac{\sqrt{k^2-|\Lambda^*j-\alpha|^2}}{k}\right]\\
&-\frac{\i\sqrt{\Lambda^*}}{k}x_2\sum_{|\Lambda^*j-\alpha|<k}e^{\i(\Lambda^*j-\alpha)x_1-\i\sqrt{k^2-|\Lambda^*j-\alpha|^2}x_2}\frac{\Lambda^*j-\alpha}{\sqrt{k^2-|\Lambda^*j-\alpha|^2}}h\left[\frac{\sqrt{k^2-|\Lambda^*j-\alpha|^2}}{k}\right]\\
&+\frac{\sqrt{\Lambda^*}}{k^2} \sum_{|\Lambda^*j-\alpha|<k}e^{\i(\Lambda^*j-\alpha)x_1-\i\sqrt{k^2-|\Lambda^*j-\alpha|^2}x_2}\frac{\Lambda^*j-\alpha}{\sqrt{k^2-|\Lambda^*j-\alpha|^2}}h'\left[\frac{\sqrt{k^2-|\Lambda^*j-\alpha|^2}}{k}\right],
\end{aligned}
\end{equation*}
then $\frac{\partial}{\partial\alpha}\left(\J_\Omega Hg\right)(\alpha,x)\in L^p\left(\Wast;H^1(\Omega^\Lambda_H)\right)$ for any $1\leq p<2$, thus the function $\J_\Omega Hg\in W^{1,p}_0\left(\Wast;H^1(\Omega^\Lambda_H)\right)$. From the Sobolev embedding theorem, $\J_\Omega Hg\in H^r_0\left(\Wast;H^1(\Omega^\Lambda_H)\right)$ for some $r\in[1/2,1)$. Thus from the property of the inverse Bloch transform, $Hg\in H_r^1(\Omega_H)$.

As in each open interval $\I\subset\Wast\setminus\S$, the terms in the finite sum do not change, and each term is an analytic function in both $\alpha$ and $x$. Suppose $\alpha_0\in\S$ is a singularity point, then when $\delta>0$ is small enough such that $(\alpha_0-\delta,\alpha_0+\delta)\cap\S=\{\alpha_0\}$. Consider the domain $(\alpha_0-\delta,\alpha_0]$, it is easy to prove that $\left(\J_\Omega Hg\right)(\alpha,x)=v(\alpha,x)+\sqrt{\alpha-\alpha_0}\,w(\alpha,x)$ where $v,w\in C^\omega\left([\alpha_0-\delta,\alpha_0];H^1(\Omega^\Lambda_H)\right)$. Similar result could be obtained in the interval $[\alpha_0,\alpha_0+\delta)$.

We will define the following space.
\begin{equation*}
\begin{aligned}
&\A^\omega_c\left(\I;S(W);\S\right):=\left\{u\in C^0\left(\I;S(W)\right):\text{ for any subinterval }\I_0\subset\I\setminus\S,\right.\\
&u\in C^\omega\left(\I_0;S(W)\right);\,\forall\alpha_j\in\I\cap\S,\text{ there is a small enough }\delta>0\text{ and two pairs }\\
&v_1,w_1\in C^\omega([\alpha_j-\delta,\alpha_j];S(W))\text{ and }v_2,w_2\in C^\omega([\alpha_j,\alpha_j+\delta];S(W))\text{ such that}\\
&\left.u=v_1+\sqrt{\alpha-\alpha_j}\,w_1\text{ for }\alpha\in(\alpha_j-\delta,\alpha_j];\,u=v_2+\sqrt{\alpha-\alpha_j}\,w_2\text{ for }\alpha\in[\alpha_j,\alpha_j+\delta).\right\},
\end{aligned}
\end{equation*}
then the Herglotz wave function discussed above belongs to this space.
\begin{assumption}\label{asp:incident1}
We assume that the incident field $u^i\in H_r^1(\Omega^H_p) $ for $r>1/2$ satisfies that the Bloch transform $\J_\omega u^i\in \A^\omega_c\left(\Wast;H^1(\Omega^\Lambda_H);\S\right)$.
\end{assumption}

 With the incident field satisfying Assumption \ref{asp:incident1}, we can obtain the following corollary of Theorem \ref{th:singularity}.

\begin{corollary}
Suppose $u^i$ satisfies Assumption \ref{asp:incident1}, then the total field $u$ with the Bloch transform $w=\J_\Omega u\in \A^\omega_c\left(\Wast;\widetilde{H}^1(\Omega^\Lambda_H);\S\right)$.  
\end{corollary}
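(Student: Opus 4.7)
My plan is to mimic the proof of Theorem \ref{th:singularity} but perform the singular-plus-analytic decomposition separately on the two half-intervals $[\alpha_j-\delta,\alpha_j]$ and $[\alpha_j,\alpha_j+\delta]$ at each $\alpha_j\in\S$. Since the Neumann-series core of the Theorem \ref{th:singularity} argument is naturally a one-sided object (it only uses the local splitting of $\widetilde{T}^+_\alpha$ near the given $\alpha_j$), the only modification is to feed it the one-sided data supplied by Assumption \ref{asp:incident1} instead of a two-sided analytic decomposition.

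First I would handle $\Wast\setminus\S$. On any subinterval $\I_0\subset\Wast\setminus\S$, Assumption \ref{asp:incident1} gives $\J_\Omega u^i\in C^\omega(\I_0;H^1(\Omega^\Lambda_H))$, so $f(\alpha,\cdot)$ depends analytically on $\alpha$; the operator $\widetilde{T}^+_\alpha$ is analytic on $\Wast\setminus\S$, and the perturbation form $\widetilde{b}_\alpha$ is analytic in $\alpha$ throughout $\Wast$. Applying analytic Fredholm theory to \eqref{eq:var_equi_pert}, together with unique solvability from Theorem \ref{th:reg2}, produces $v(\alpha,\cdot)=e^{\i\alpha x_1}w(\alpha,\cdot)\in C^\omega(\I_0;\widetilde{H}^1(\Omega^\Lambda_H))$, and multiplying by the analytic factor $e^{-\i\alpha x_1}$ transfers the property back to $w$.

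Next I would treat each $\alpha_j\in\Wast\cap\S$ locally. Choose $\delta>0$ with $(\alpha_j-\delta,\alpha_j+\delta)\cap\S=\{\alpha_j\}$. On $[\alpha_j-\delta,\alpha_j]$, Assumption \ref{asp:incident1} provides $\J_\Omega u^i=v_1^i+\sqrt{\alpha-\alpha_j}\,w_1^i$ with $v_1^i,w_1^i$ analytic up to the endpoint, and hence the right-hand side of \eqref{eq:var_equi_pert} inherits the same form $F_1+\sqrt{\alpha-\alpha_j}\,G_1$. Following the remark after Theorem \ref{th:kirsch}, near $\alpha_j$ the operator $\widetilde{T}^+_\alpha$ admits a splitting "analytic $+\sqrt{\alpha-\alpha_j}\cdot$ analytic", while $\widetilde{b}_\alpha$ is already analytic. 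Inserting the ansatz $v=V_1+\sqrt{\alpha-\alpha_j}\,W_1$ into \eqref{eq:var_equi_pert} and separating the rational and the square-root components yields a coupled $2\times2$ variational system for $(V_1,W_1)$ whose leading-order operator is invertible at $\alpha_j$; the Neumann series then produces $V_1,W_1\in C^\omega([\alpha_j-\delta,\alpha_j];\widetilde{H}^1(\Omega^\Lambda_H))$. Repeating the argument on $[\alpha_j,\alpha_j+\delta]$ with the (in general different) pair $(v_2^i,w_2^i)$ gives the right-hand decomposition, and multiplying by $e^{-\i\alpha x_1}$ yields the required decomposition for $w$. Continuity of $w$ at $\alpha_j$ then follows because $\sqrt{\alpha-\alpha_j}\to 0$ from either side and the two one-sided values of the analytic parts must coincide with $w(\alpha_j,\cdot)$ by the $C^0$-regularity already guaranteed by Theorem \ref{th:reg2}.

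The main obstacle, in contrast to Theorem \ref{th:singularity}, is that the two one-sided analytic pairs cannot in general be glued into a single two-sided analytic pair, because the pairs $(v_1^i,w_1^i)$ and $(v_2^i,w_2^i)$ delivered by Assumption \ref{asp:incident1} need not match smoothly across $\alpha_j$. This is precisely why the target space $\A^\omega_c$ is strictly weaker than $\A^\omega$, and it means the Neumann-series bookkeeping must be carried out genuinely on one side of $\alpha_j$ at a time; once one accepts this, the remainder is a routine transcription of the arguments of Theorem \ref{th:regul_bloch} and Theorem \ref{th:singularity}.
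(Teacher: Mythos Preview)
Your proposal is correct and follows exactly the route the paper intends: the paper does not write out a proof of this corollary at all, explicitly stating that ``we only want to show the regularity results without proving them, for the proofs are quite similar''. Your argument is precisely the natural transcription of the proofs of Theorem~\ref{th:regul_bloch} and Theorem~\ref{th:singularity} to the one-sided setting demanded by $\A^\omega_c$, with the correct observation that $\widetilde{b}_\alpha(u_T,\cdot)$ depends analytically on $\alpha$ (since $u_T$ is fixed) and that the Neumann-series/operator-splitting step near each $\alpha_j$ is intrinsically local and hence works separately on $[\alpha_j-\delta,\alpha_j]$ and $[\alpha_j,\alpha_j+\delta]$.
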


Recall the definition of the set $\S$ of singularities, and let $\alpha_j$, $j\in\Z$ be an ascending series of $\S$, then we can get the corollary of the regularity result in each interval $(\alpha_j,\alpha_{j+1})$.

\begin{corollary}
Suppose $u^i$ satisfies Assumption \ref{asp:incident1}, then the Bloch transform $w=\J_\Omega u$ belongs to the space $C^0\left(\Wast;\widetilde{H}^1(\Omega^\Lambda_H)\right)$. Moreover, for each $j\in\Z$, for $\alpha$ in the interval $[\alpha_j,\alpha_{j+1}]$, there are three functions  $w^j_0,\,w^j_1,\,w^j_2\in C^\infty\left([\alpha_j,\alpha_{j+1}];\widetilde{H}^1(\Omega^\Lambda_H)\right)$ such that
\begin{equation}
w=w^j_0+\sqrt{\alpha-\alpha_j}\,w^j_1+\sqrt{\alpha-\alpha_{j+1}}w^j_2.
\end{equation}
\end{corollary}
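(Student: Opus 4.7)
The plan is to patch the two one-sided analytic decompositions of $w$ supplied by the preceding corollary at the adjacent singularities $\alpha_j$ and $\alpha_{j+1}$ into a single global representation on $[\alpha_j,\alpha_{j+1}]$. By the definition of $\A^\omega_c$ applied at $\alpha_j$ from the right and at $\alpha_{j+1}$ from the left, and after shrinking a common $\delta>0$ to satisfy $2\delta<\alpha_{j+1}-\alpha_j$, I obtain analytic pairs
\begin{equation*}
(v_R,W_R)\in C^\omega\bigl([\alpha_j,\alpha_j+\delta];\widetilde{H}^1(\Omega^\Lambda_H)\bigr),\quad (v_L,W_L)\in C^\omega\bigl([\alpha_{j+1}-\delta,\alpha_{j+1}];\widetilde{H}^1(\Omega^\Lambda_H)\bigr),
\end{equation*}
with $w=v_R+\sqrt{\alpha-\alpha_j}\,W_R$ on $[\alpha_j,\alpha_j+\delta]$ and $w=v_L+\sqrt{\alpha-\alpha_{j+1}}\,W_L$ on $[\alpha_{j+1}-\delta,\alpha_{j+1}]$. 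On the open interval $(\alpha_j,\alpha_{j+1})\subset\Wast\setminus\S$ the same definition gives $w\in C^\omega$.

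Next I pick scalar $C^\infty$ cut-offs $\chi_j,\chi_{j+1}\colon\R\to[0,1]$ with $\mathrm{supp}(\chi_j)\subset[\alpha_j,\alpha_j+\delta)$, $\chi_j\equiv 1$ on $[\alpha_j,\alpha_j+\delta/2]$, and symmetrically for $\chi_{j+1}$, so that the two supports are disjoint. I then define
\begin{equation*}
w_1^j:=\chi_j\,W_R,\qquad w_2^j:=\chi_{j+1}\,W_L,\qquad w_0^j:=w-\sqrt{\alpha-\alpha_j}\,w_1^j-\sqrt{\alpha-\alpha_{j+1}}\,w_2^j,
\end{equation*}
with $w_1^j$ and $w_2^j$ extended by zero outside the support of the respective cut-off. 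By construction the identity in the statement holds on $[\alpha_j,\alpha_{j+1}]$. Both $w_1^j$ and $w_2^j$ belong to $C^\infty([\alpha_j,\alpha_{j+1}];\widetilde{H}^1(\Omega^\Lambda_H))$, being on each support the product of a scalar $C^\infty$ cut-off with a real-analytic factor, while outside the support both the function and all its derivatives vanish identically.

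To verify that $w_0^j\in C^\infty([\alpha_j,\alpha_{j+1}];\widetilde{H}^1(\Omega^\Lambda_H))$ I inspect three regions. On $[\alpha_j,\alpha_j+\delta/2]$ one has $\chi_j\equiv 1$ and $\chi_{j+1}\equiv 0$, so the defining formula collapses to $w_0^j=w-\sqrt{\alpha-\alpha_j}\,W_R=v_R$, which is analytic up to and including $\alpha=\alpha_j$; a symmetric computation gives $w_0^j=v_L$ on $[\alpha_{j+1}-\delta/2,\alpha_{j+1}]$. At any interior point $\alpha^*\in(\alpha_j,\alpha_{j+1})$, $w$ is analytic near $\alpha^*$ and each square-root factor $\sqrt{\alpha-\alpha_j}$, $\sqrt{\alpha-\alpha_{j+1}}$ is analytic at $\alpha^*$ (with the convention that $\sqrt{\alpha-\alpha_{j+1}}=\i\sqrt{\alpha_{j+1}-\alpha}$ for $\alpha<\alpha_{j+1}$), so $w_0^j$ is smooth there. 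Continuity of $w$ on the whole of $\Wast$ is then inherited from these representations on consecutive intervals together with the matching values at shared endpoints.

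The only delicate point is bookkeeping: one must fix the square-root branches (non-negative real and imaginary parts, as in Theorem \ref{th:kirsch} and Corollary \ref{th:regul_plain_wave}) consistently with the local decompositions supplied by $\A^\omega_c$ at each $\alpha_j$, so that $\sqrt{\alpha-\alpha_{j+1}}$ on $[\alpha_j,\alpha_{j+1})$ is interpreted as the smooth pure-imaginary branch $\i\sqrt{\alpha_{j+1}-\alpha}$ used in the corollary near $\alpha_{j+1}$. Once this convention is in place the argument reduces to a standard cut-off-and-paste and nothing further is needed.
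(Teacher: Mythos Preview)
Your argument is correct. The paper does not give an explicit proof of this corollary; at the beginning of Section~5 it announces that the results there will be stated without proof because the arguments are ``quite similar'' to the preceding ones, and the corollary is then recorded as an immediate consequence of the membership $w\in\A^\omega_c\bigl(\Wast;\widetilde{H}^1(\Omega^\Lambda_H);\S\bigr)$. Your cut-off-and-paste construction is precisely the standard way to pass from the two one-sided local decompositions in the definition of $\A^\omega_c$ to a global $C^\infty$ decomposition on the closed interval $[\alpha_j,\alpha_{j+1}]$, so your approach matches what the paper implicitly relies on.
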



\section*{Appendix: Bloch transform}

A useful tool to handle the (locally perturbed) periodic scattering problems is the Bloch transform, see \cite{Lechl2016}, which build up a "bridge" between the infinitely defined problem and a family of  coupled quasi-periodic problems defined in one  single periodic cell. For any function $\phi\in C_0^\infty(\Omega)$, the one dimensional Bloch transform is defined as
\begin{equation}
\left(\J_\Omega\phi\right)(\alpha,x)=\left[\frac{\Lambda}{2\pi}\right]^{1/2}\sum_{j\in\Z}\phi\left(x+\left(\begin{matrix}
\Lambda j\\0
\end{matrix}\right)\right)e^{-\i\alpha\Lambda j},\quad \alpha\in\R, \,x_1\in\R,
\end{equation}
where $\Omega$ is a $\Lambda$-periodic (where $\Lambda>0$) strip  or curve in $x_1$-direction in $\R^2$, which satisfies that if $(x_1,x_2)^T\in\Omega$, then $(x_1\pm\Lambda,x_2)^T\in\Omega$. 
\begin{remark}
Though $\Omega$ does not always have the same definition throughout this paper, we all denote the one dimensional Bloch transforms by $\J_\Omega$. 
\end{remark}
Thus $\J_\Omega\phi(\alpha,x)$ is $\alpha$-quasiperiodic in $x_1$ for period $\Lambda$, for a fixed $\alpha$, i.e.,
\begin{equation*}
\left(\J_\Omega\phi\right)\left(\alpha,x+\left(\begin{matrix}
\Lambda\\0
\end{matrix}\right)\right)=e^{\i\alpha\Lambda}\left(\J_\Omega\phi\right)\left(\alpha,x\right),
\end{equation*}
 and $2\pi/\Lambda$-periodic in $\alpha\in\R$ for a fixed $x$, i.e.
\begin{equation*}
\left(\J_\Omega\phi\right)(\alpha+2\pi/\Lambda,x)=\left(\J_\Omega\phi\right)(\alpha,x).
\end{equation*} 
  Let $\Lambda^*=2\pi/\Lambda$ and set
\begin{equation*}
\W=\left(-\frac{\Lambda}{2},\frac{\Lambda}{2}\right],\quad\Wast=\left(-\frac{\Lambda^*}{2},\frac{\Lambda^*}{2}\right]=\left(-\frac{\pi}{\Lambda},\frac{\pi}{\Lambda}\right],
\end{equation*} 
and let $\Omega^\Lambda=\W\times\R\cap\Omega$. Then $\J_\Omega$ maps a function in $C_0^\infty(\Omega)$ into a function that belongs to the space $C^\infty(\Wast\times\Omega^\Lambda)$. 

To introduce more about the properties of the Bloch transform $\J_\Omega$, we need some notations and spaces. In the rest of this section, $\Omega$ is assumed to be bounded in $x_2$-direction. Firstly, for any $s,r\in\R$, define the weighted space
\begin{equation*}
H_r^s(\Omega)=\left\{\phi\in \mathcal{D}'(\Omega):\,x\rightarrow\left(1+|x_1|^2\right)^{r/2}\phi(x)\in H^s(\Omega)\right\}
\end{equation*}
with the norm
\begin{equation*}
\|\phi\|_{H_r^s(\Omega)}=\left\|\left(1+|x_1|^2\right)^{r/2}\phi(x)\right\|_{H^s(\Omega)}.
\end{equation*}
We can also define the function space in $\Wast\times\Omega^\Lambda$. For some integers $r\in\N$ and some real number $s\in\R$, define the space
\begin{equation*}
H^r_0(\Wast;H^s_\alpha(\Omega^\Lambda))=\left\{\phi\in\mathcal{D}'(\Wast\times\Omega^\Lambda):\,\sum_{\ell=0}^r\int_\Wast\left\|\partial^\ell_\alpha\phi(\alpha,\cdot)\right\|^2_{H^s(\Wast)}\d\alpha\right\}
\end{equation*}
with the norm defined as
\begin{equation*}
\|\phi\|_{H_0^r(\Wast;H^s_\alpha(\Omega^\Lambda))}=\left[\sum_{\ell=0}^r\int_\Wast\left\|\partial^\ell_\alpha\phi(\alpha,\cdot)\right\|^2_{H^s(\Wast)}\d\alpha\right]^{1/2}.
\end{equation*}
With the interpolation in $r$, the definition could be extended to all $r\geq 0$. We can also define the spaces with a negative $r$ by duality arguments, thus the space $H^r_0(\Wast;H^s_\alpha(\Omega^\Lambda))$ is well defined for any $r,\,s\in\R$.   With these definitions, the following properties for Bloch transform holds. For proofs see Theorem 8 in \cite{Lechl2016}, and we will omit them here.

\begin{theorem}
The Bloch transform extends to an isomorphism between $H^s_r(\Omega)$ and $H^r_p(\Wast;H^s_\alpha(\Omega^\Lambda))$ for all $s,r\in\R$. When $s=r=0$, $\J_\Omega$ is an isometry. The inverse operator 
\begin{equation}
\left(\J_\Omega^{-1}w\right)\left(x+\left(\begin{matrix}
\Lambda j\\0
\end{matrix}\right)\right)=\left[\frac{\Lambda}{2\pi}\right]^{1/2}\int_\Wast w(\alpha,x)e^{\i\Lambda j\alpha}\d\alpha,\quad x\in\Omega^\Lambda,\,j\in\Z.
\end{equation}
Moreover, the $L^2$-adjoint operator $\J_\Omega^*$ equals to its inverse $\J_\Omega^{-1}$.
\end{theorem}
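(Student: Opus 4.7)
The plan is to first establish the $L^2$-isometry at $s=r=0$ on the dense subspace $C_0^\infty(\Omega)$, read off the inverse formula and the adjoint identity from it, and finally bootstrap to all $s, r \in \R$ by intertwining arguments combined with interpolation and duality. The two indices can be treated independently because $x$-derivatives commute with $\J_\Omega$, while the $x_1$-weight on the source side corresponds to $\alpha$-differentiation on the image side.

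For the isometry at $s=r=0$, I would expand, for $\phi\in C_0^\infty(\Omega)$,
\begin{equation*}
\|\J_\Omega \phi\|_{L^2(\Wast \times \Omega^\Lambda)}^2 = \frac{\Lambda}{2\pi} \int_\Wast \int_{\Omega^\Lambda} \sum_{j,k \in \Z} \phi(x + \Lambda j \e_1)\, \overline{\phi(x + \Lambda k \e_1)}\, e^{-\i\alpha\Lambda(j-k)} \d x \d\alpha,
\end{equation*}
swap the finite sum (compact support of $\phi$) with the integrals, and apply the orthogonality $\int_\Wast e^{-\i\alpha\Lambda(j-k)}\d\alpha = (2\pi/\Lambda)\,\delta_{jk}$. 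The double sum collapses to $\sum_j \int_{\Omega^\Lambda} |\phi(x + \Lambda j \e_1)|^2 \d x = \|\phi\|_{L^2(\Omega)}^2$, using that $\Omega$ is the disjoint union $\bigsqcup_{j\in\Z}(\Omega^\Lambda + \Lambda j \e_1)$ of translated fundamental cells. Density of $C_0^\infty(\Omega)$ in $L^2(\Omega)$ then extends $\J_\Omega$ to an isometry on $L^2(\Omega)$.

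For the inverse formula, I would plug $w = \J_\Omega\phi$ into its right-hand side. Using the defining series for $\J_\Omega\phi$ and the same orthogonality relation, the $\alpha$-integral collapses via $\delta_{jk}$ and returns exactly $\phi(x+\Lambda j\e_1)$ for $x\in\Omega^\Lambda$, confirming the stated inversion formula on $C_0^\infty(\Omega)$. Combined with the isometry this shows $\J_\Omega$ is unitary $L^2(\Omega)\to L^2(\Wast\times\Omega^\Lambda)$, from which the identity $\J_\Omega^{*} = \J_\Omega^{-1}$ is automatic for a unitary operator between Hilbert spaces.

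To extend to arbitrary $s,r\in\R$, I would exploit two intertwining identities. First, every $\partial_{x_i}$ commutes with $\J_\Omega$ (termwise differentiation of the defining series), so the $L^2$-isometry upgrades to an isomorphism $H^s_0(\Omega)\to L^2(\Wast;H^s_\alpha(\Omega^\Lambda))$ for every nonnegative integer $s$. Second, $\alpha$-differentiation of $\J_\Omega\phi$ brings down a factor $-\i\Lambda j$ from the $j$-th summand, which is precisely the contribution of multiplying $\phi$ by the weight $x_1$ on the $j$-th translate; carried out carefully, this yields an equivalence between $\|(1+|x_1|^2)^{r/2}\phi\|_{L^2(\Omega)}$ and $\bigl(\sum_{\ell\le r}\int_\Wast\|\partial_\alpha^\ell \J_\Omega\phi\|_{L^2(\Omega^\Lambda)}^2\d\alpha\bigr)^{1/2}$ for nonnegative integer $r$. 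The isomorphism on integer pairs $(r,s)$ with $r,s\ge 0$ then follows, and interpolation in both indices together with $L^2$-duality (using the adjoint identity just proven) covers all remaining $r,s\in\R$. The main obstacle is this last bootstrap: one must verify that the scales $H^s_r(\Omega)$ and $H^r_0(\Wast;H^s_\alpha(\Omega^\Lambda))$ remain compatible under interpolation and duality, and that the commutator identities lift from $C_0^\infty$ to the weighted Sobolev spaces — technicalities which the excerpt delegates to Theorem 8 of \cite{Lechl2016}.
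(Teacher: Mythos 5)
The paper does not actually prove this theorem: it states the properties and explicitly defers the proof to Theorem 8 of \cite{Lechl2016}, so there is no in-paper argument to compare against line by line. Your proposal is, in substance, a correct reconstruction of the standard proof from that reference. The $L^2$ computation is right: the orthogonality $\int_\Wast e^{-\i\alpha\Lambda(j-k)}\d\alpha=(2\pi/\Lambda)\delta_{jk}$, the collapse of the double sum, and the decomposition of $\Omega$ into the disjoint translates $\Omega^\Lambda+\Lambda j\,\e_1$ (exact here because $\W$ is half-open) give the isometry on the dense subspace $C_0^\infty(\Omega)$; the inversion formula follows by the same orthogonality, and $\J_\Omega^\ast=\J_\Omega^{-1}$ is then automatic from unitarity. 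Your intertwining identities are also the right ones: $\partial_{x_i}$ commutes with $\J_\Omega$ termwise (and preserves $\alpha$-quasi-periodicity, so the image lands in $H^s_\alpha(\Omega^\Lambda)$ as required), and $\J_\Omega(x_1\phi)=x_1\J_\Omega\phi+\i\,\partial_\alpha\J_\Omega\phi$, with $x_1$ bounded on the cell, yields the claimed norm equivalence for integer $r$.

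One simplification worth knowing: rather than doing integer $r$ plus interpolation plus duality, observe that for fixed $x\in\Omega^\Lambda$ the function $\alpha\mapsto(\J_\Omega\phi)(\alpha,x)$ is a Fourier series on $\Wast$ with coefficients $\phi(x+\Lambda j\,\e_1)$; since the $H^r_0(\Wast)$-norm of a periodic function is equivalent, for \emph{every} real $r$ simultaneously, to the weighted coefficient norm $\bigl(\sum_j(1+|j|^2)^r|\phi(x+\Lambda j\,\e_1)|^2\bigr)^{1/2}$, and $|x_1|\simeq\Lambda|j|$ on the $j$-th translate, integrating over $\Omega^\Lambda$ gives the isomorphism in the $r$-index in one stroke. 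This is essentially how the cited reference proceeds, and it sidesteps the compatibility questions you flag at the end. That said, your interpolation/duality bootstrap is not a gap in this setting: the paper \emph{defines} $H^r_0(\Wast;H^s_\alpha(\Omega^\Lambda))$ for non-integer and negative $r$ precisely by interpolation and duality, so the scales are compatible by construction, and the commutator identities extend from $C_0^\infty$ by density. (Minor typographical note: the ``$H^r_p$'' in the theorem statement should read $H^r_0$, consistent with the space defined just above it.)
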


\section*{Acknowledgements.} The author was supported by the University of Bremen and the European Union FP7 COFUND under grant agreement n$^\circ{}\,$600411.

\bibliographystyle{alpha}
\bibliography{ip-biblio} 

\end{document}